\def\RR{{\mathbb{R}}}
\def\ZZ{{\mathbb{Z}}}
\def\CC{{\mathbb{C}}}
\def\KK{{\mathbb{K}}}
\numberwithin{equation}{section}
\newcommand{\rank}{\mathop{\rm rank} }
\newcommand{\ncrank}{\mathop{\rm nc\mbox{-}rank} }
\newcommand{\diag}{\mathop{\rm diag} }
\newcommand{\trace}{\mathop{\rm tr} }
\newtheorem{Thm}{Theorem}[section]
\newtheorem{Lem}[Thm]{Lemma}
\newtheorem{Cor}[Thm]{Corollary}
\theoremstyle{definition}
\newtheorem{Rem}[Thm]{Remark}
\title{A scaling characterization of nc-rank \\
via unbounded gradient flow}
\author{Hiroshi Hirai \\
Graduate School of Mathematics,\\
Nagoya University, Nagoya, 464-8602, Japan.\\
\texttt{\normalsize hirai.hiroshi@math.nagoya-u.ac.jp}
}
\begin{document}

\maketitle
\begin{abstract}
Given a tuple of $n \times n$ complex matrices ${\cal A} = (A_1,A_2,\ldots, A_m)$, 
the linear symbolic matrix $A = A_1x_1 + A_2x_2 + \cdots + A_m x_m$
is nonsingular in the noncommutative sense
if and only if the completely positive operators 
$T_{\cal A} (X) = \sum_{i=1}^m A_i X A_i^{\dagger}$ and $T_{\cal A}^*(X) = \sum_{i=1}^m A_i^{\dagger} X A_i$
can be scaled to be doubly stochastic: For every $\epsilon > 0$ 
there are $g,h \in GL(n,\CC)$ such that $\|T_{g^{\dagger}{\cal A}h}(I)- I\| < \epsilon$,  
$\| T^*_{g^\dagger{\cal A}h}(I) - I\| < \epsilon$.
In this paper, we show a refinement: The noncommutative corank of 
$A$ is equal to one-half of the minimum residual 
$\|T_{g^{\dagger}{\cal A}h}(I) - I\|_1 + \|T^*_{g^{\dagger}{\cal A}h}(I) - I\|_1$ over all possible scalings $g^{\dagger}{\cal A}h$, 
where $\|\cdot \|_1$ is the trace norm. 
To show this, we interpret the residuals as gradients of a convex function on symmetric space $GL(n,\CC)/U_n$, and establish a general duality relation
of the minimum gradient-norm of a lower-unbounded convex function $f$ on 
 $GL(n,\CC)/U_n$ with an invariant Finsler metric, 
by utilizing the unbounded gradient flow of $f$ at infinity.
\end{abstract}
Keywords: Noncommutative rank, operator scaling, geodesically convex optimization, gradient flow, Finsler manifold.\\
MSC classes: 90C25, 53C35

\section{Introduction}
Given an $m$ tuple of $n \times n$ matrices ${\cal A} = (A_1,A_2,\ldots,A_m)$ over field $\KK$, 
{\em Edmonds' problem}~\cite{Edmonds67}
asks to compute the rank of the matrix
\begin{equation}
A = A_1 x_1 + A_2 x_2 + \cdots + A_m x_m
\end{equation}
for indeterminates $x_1,x_2,\ldots,x_m$, 
where the rank is considered in the rational function field $\KK(x_1,x_2,\ldots,x_m)$.
A deterministic polynomial time algorithm for Edmonds' problem is not known, and 
is one of the prominent open problems in discrete mathematics 
and theoretical computer science; see e.g.,~\cite{Kabanets2004,Lovasz89}. 
Recently, a noncommutative version of Edmonds' problem was introduced 
by Ivanyos, Qiao, and Subrahmanyam~\cite{IQS2017} and is bringing about new developments. 
In this setting, 
the indeterminates are noncommutative $x_ix_j  \neq x_j x_i$, and the rank---the {\em noncommutative rank (nc-rank)} $\ncrank A$---is considered in the {\em free skew field}~\cite{Amitsur,Cohn} into which the nonncommutative polynomial ring $\KK\langle x_1,x_2,\ldots,x_m\rangle$ embeds.
Recent groundbreaking results~\cite{GGOW,HamadaHirai2021,IQS2018} 
show that the noncommutative Edmonds' problem 
can be solved in polynomial time.
The techniques used in this result have unexpected but fruitful interactions 
with many areas of pure and applied mathematics (noncommutative algebra, operator theory, representation theory, invariant theory, differential geometry, statistics, optimization, quantum information,...), 
and act as catalysts for interdisciplinary research; see e.g., \cite{BFGOWW_FOCS2019} and references therein.

One of the key ingredients of this development is the following characterization 
of the nc-rank due to Fortin and Reutenauer~\cite{FortinReutenauer04}.
For a vector subspace $U$ in $\KK^n$ 
(denoted by $U \leqslant \KK^n$), 
let ${\cal A} U$ denote the vector subspace 
that is the sum of $A_k U$ over $k=1,2,\ldots,m$. 
\begin{Thm}[\cite{FortinReutenauer04}]\label{thm:FR}
    $\displaystyle \ncrank A = n + \min_{U \leqslant \KK^n}  \dim {\cal A} U - \dim U$.
\end{Thm}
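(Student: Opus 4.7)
My plan is to prove the two inequalities $\leq$ and $\geq$ separately. The direction $\ncrank A \leq n + \dim {\cal A} U - \dim U$ is elementary linear algebra, while the reverse is the deep part and requires nontrivial input from the theory of matrices over the free skew field.

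For the easy direction, fix $U \leqslant \KK^n$ with $s := \dim U$ and $t := \dim {\cal A} U$. I choose $h \in GL_n(\KK)$ whose first $s$ columns span $U$ and $g \in GL_n(\KK)$ whose first $t$ columns span ${\cal A} U$. Then every $g^{-1} A_k h$ shares a common zero block of size $(n-t) \times s$ in the lower-left corner, so the transformed pencil $A' := g^{-1} A h$ inherits this block pattern and factors as
\begin{equation*}
A' = \begin{pmatrix} I_t & 0 \\ 0 & D \end{pmatrix} \begin{pmatrix} B & C \\ 0 & I_{n-s} \end{pmatrix},
\end{equation*}
with intermediate dimension $t + (n-s) = n - (s-t)$. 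Since nc-rank equals the inner rank and is invariant under constant left/right multiplications by invertibles, $\ncrank A \leq n - (s-t) = n + \dim {\cal A} U - \dim U$. Minimizing over $U$ gives the upper bound.

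For the reverse direction, suppose $\ncrank A = r$ and my task is to produce a subspace $U$ with $\dim U - \dim {\cal A} U \geq n - r$. The key ingredient is Cohn's structural theorem for matrices of bounded inner rank: any $n \times n$ matrix $M$ over the free skew field with inner rank $\leq r$ admits invertible $P, Q$ over the skew field such that $PMQ$ has a zero block of size $p \times q$ with $p + q \geq 2n - r$. Because $A$ is a linear pencil and $\KK\langle x_1,\ldots,x_m\rangle$ is a fir (free ideal ring), a degree/grading argument of Gauss-elimination flavor lets one replace $P, Q$ by constant invertible matrices $g^{-1}, h \in GL_n(\KK)$, still achieving a zero $(n-t)\times s$ block in $g^{-1} A h$ with $s - t \geq n - r$. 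Setting $U := h(\mathrm{span}(e_1,\ldots,e_s))$, the vanishing of that block forces ${\cal A} U \subseteq g(\mathrm{span}(e_1,\ldots,e_t))$, so $\dim U = s$ and $\dim {\cal A} U \leq t$, yielding $\dim {\cal A} U - \dim U \leq t - s \leq r - n$, as required.

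The main obstacle is the reduction from skew-field-valued to constant transformations in the lower bound: Cohn's hollow-matrix theorem and its refinement to linear pencils are substantial results in noncommutative algebra, and I would invoke them as a black box rather than rebuild the theory of firs and full matrices. Everything else is a direct translation between the subspace, block-matrix, and factorization viewpoints.
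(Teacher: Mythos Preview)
The paper does not prove this theorem; it is quoted from Fortin and Reutenauer as a known foundational result, and the paper's own contributions (the scaling characterization and the gradient-flow duality) build on it without reproving it. So there is no ``paper's own proof'' to compare your proposal against.

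That said, your sketch is a faithful outline of how the original Fortin--Reutenauer argument runs. The upper bound via a common $(n-t)\times s$ zero block and a factorization through size $n-(s-t)$ is exactly the standard inner-rank estimate. For the lower bound, the original source does rely on Cohn's theory of free ideal rings: since $\KK\langle x_1,\ldots,x_m\rangle$ is a fir (hence a Sylvester domain), the inner rank over the free algebra agrees with that over its universal skew field of fractions, and a linear matrix of inner rank $r$ can be brought to hollow form by \emph{constant} invertible matrices. Your candid labeling of this reduction as a black box is appropriate---that step is precisely the nontrivial noncommutative-algebraic content, and not something one would redo from scratch in the present paper's context.
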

The first polynomial-time algorithm of nc-rank by Garg, Gurvits, Oliveira, and Wigderson~\cite{GGOW} 
is based on a connection to {\em operator scaling} (Gurvits~\cite{Gurvits2004})---a quantum generalization of the classical {\em matrix scaling} (Sinkhorn~\cite{Sinkhorn1964}). 
Suppose from now that $\KK$ is the field of complex numbers $\CC$.
The matrix tuple ${\cal A} = (A_1,A_2,\ldots,A_m)$ is 
associated with the completely positive operator 
$T_{\cal A}: \CC^{n\times n} \to \CC^{n \times n}$ and its dual $T^*_{\cal A}: \CC^{n\times n} \to \CC^{n \times n}$ by
\begin{equation}
T_{\cal A}(X) := \sum_{k=1}^m A_k X A_k^{\dagger}, \quad T_{\cal A}^*(X) := \sum_{k=1}^m A_k^{\dagger} X A_k \quad (X \in \CC^{n \times n}),
\end{equation}
where $(\cdot)^{\dagger}$ means the conjugate transpose.
For $g,h \in GL_n = GL(n,\CC)$, consider the scaled tuple
$g^{\dagger}{\cal A} h = (g^{\dagger}A_1h, g^{\dagger}A_2h,\ldots, g^{\dagger}A_mh)$, and the corresponding scaled operators 
$T_{g^{\dagger}{\cal A}h}$ and $T^*_{g^{\dagger}{\cal A}h}$.
The tuple ${\cal A}$ is said to be {\em (approximately doubly-stochastic) scalable} 
if for every $\epsilon > 0$ there are $g,h \in GL_n$ 
such that
\begin{equation}\label{eqn:epsilon}
\|T_{g^{\dagger}{\cal A}h}(I) - I\|^2_2 +   \|T^*_{g^{\dagger}{\cal A}h}(I) - I\|_2^2 < \epsilon^2,
\end{equation}
where $\|\cdot\|_2$ is the Frobenius norm. Then, Gurvits' characterization~\cite{Gurvits2004} of scalability is the following:
\begin{Thm}[\cite{Gurvits2004}]\label{thm:Gurvits}
    The following are equivalent:
 \begin{itemize}
     \item[(1)] ${\cal A}$ is scalable.
     \item[(2)] $\displaystyle \inf_{X \succ 0} \frac{\det \sum_{k=1}^m A_k X A_k^{\dagger}}{\det X} > 0$.
     \item[(3)] $\rank T_{\cal A}(X) \geq \rank X$ for every $X \succeq 0$.
 \end{itemize}   
\end{Thm}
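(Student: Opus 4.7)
The plan is to pivot the three conditions around the capacity
\[
\capacity(T_{\cal A}) := \inf_{X \succ 0,\ \det X = 1} \det T_{\cal A}(X),
\]
so that condition~(2) is literally $\capacity(T_{\cal A}) > 0$. The identity $T_{g^\dagger{\cal A}h}(X) = g^\dagger T_{\cal A}(hXh^\dagger) g$ combined with the change of variable $Y = hXh^\dagger$ yields the scaling covariance
\[
\capacity(T_{g^\dagger{\cal A}h}) = |\det g|^2\,|\det h|^2\,\capacity(T_{\cal A}),
\]
which lets us pass freely between ${\cal A}$ and its scalings throughout the argument.

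For $(2) \Leftrightarrow (3)$, I would prove $(2) \Rightarrow (3)$ by contrapositive: if some $X \succeq 0$ has $r := \rank X > \rank T_{\cal A}(X)$, then $X_\epsilon := X + \epsilon I$ satisfies $\det X_\epsilon = \Theta(\epsilon^{n-r})$, while $T_{\cal A}(X_\epsilon) = T_{\cal A}(X) + \epsilon\, T_{\cal A}(I)$ is a perturbation of a matrix whose kernel has dimension $\geq n-r+1$, yielding $\det T_{\cal A}(X_\epsilon) = O(\epsilon^{n-r+1})$; normalizing $X_\epsilon$ to unit determinant and letting $\epsilon \to 0$ forces $\capacity(T_{\cal A}) = 0$. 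The converse $(3) \Rightarrow (2)$ is the deep direction: I would argue by induction on $n$, exploiting the concavity of $X \mapsto (\det T_{\cal A}(X))^{1/n}$ on the PSD cone (a hyperbolic-polynomial / Alexandrov--Fenchel inequality) together with the rank hypothesis to show that any infimizing sequence for $\capacity$ must, after a suitable renormalization, exhibit exactly a rank drop of the type forbidden by~(3).

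For $(1) \Leftrightarrow (2)$, I would use the alternating Sinkhorn-type iteration which alternately sets $g \leftarrow T_{\cal A}(hh^\dagger)^{-1/2}$ and $h \leftarrow T^*_{\cal A}(gg^\dagger)^{-1/2}$. The direction $(1) \Rightarrow (2)$ follows because an exactly doubly-stochastic operator has positive capacity (a bound of van der Waerden--Egorychev--Gurvits type), combined with the scaling covariance and a limiting argument over $\epsilon$-approximate scalings. For $(2) \Rightarrow (1)$, I take the potential
\[
\Phi(g,h) := \log \det \bigl(g^\dagger T_{\cal A}(hh^\dagger) g\bigr) + \log \det \bigl(h^\dagger T^*_{\cal A}(gg^\dagger) h\bigr);
\]
each Sinkhorn step decreases $\Phi$ by an amount bounded below by a multiple of $\|T_{g^\dagger{\cal A}h}(I)-I\|_2^2 + \|T^*_{g^\dagger{\cal A}h}(I)-I\|_2^2$ (a Pinsker / quantum-relative-entropy style inequality), while $\capacity(T_{\cal A}) > 0$ combined with the scaling covariance provides a global lower bound on $\Phi$; hence the defects along the iteration must drive to zero, giving scalability.

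The main obstacle is clearly the implication $(3) \Rightarrow (2)$: it must convert a qualitative rank-nondecrease hypothesis into a quantitative positive lower bound on a determinantal infimum over the unbounded cone $\{X \succ 0,\ \det X = 1\}$. A soft compactness argument cannot suffice, and one must genuinely exploit the polynomial structure of $\det T_{\cal A}$---through Gurvits's inductive capacity-lower-bound argument or an Alexandrov--Fenchel mixed-discriminant inequality---to rule out escape to the boundary of the cone.
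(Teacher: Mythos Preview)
The paper does not prove Theorem~\ref{thm:Gurvits} at all: it is quoted as a known result of Gurvits~\cite{Gurvits2004} and used as a black box (only the equivalence of (3) with the subspace condition $\dim {\cal A}U \geq \dim U$ is remarked upon, in the two lines following the statement). So there is no ``paper's own proof'' to compare your proposal against.

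That said, your outline is essentially the standard route through Gurvits's capacity argument, and you have correctly located the only hard implication, $(3)\Rightarrow(2)$. Two comments. First, your $(1)\Rightarrow(2)$ argument is shakier than it looks: scaling covariance gives $\capacity(T_{g^\dagger{\cal A}h}) = |\det g|^2|\det h|^2\capacity(T_{\cal A})$, but approximate scalability gives you no a priori control over $|\det g_\epsilon|$, $|\det h_\epsilon|$, so ``near-doubly-stochastic has capacity bounded away from zero'' does not immediately transfer back to ${\cal A}$. The clean way is instead to prove $\neg(3)\Rightarrow\neg(1)$ directly: a shrinking subspace $\dim {\cal A}U < \dim U$ is a scaling-invariant obstruction to being $\epsilon$-doubly-stochastic for small $\epsilon$ (this is the easy inequality in Lemma~\ref{lem:weakduality2} of the present paper). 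Second, your sketch of $(3)\Rightarrow(2)$ is only a placeholder; the actual inductive lower bound on capacity (via restriction to a coordinate hyperplane and the concavity of $X\mapsto(\det T_{\cal A}(X))^{1/n}$) is where all the content lives, and would need to be written out to count as a proof.
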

By $X \succ 0$ (resp. $X \succeq 0$) we mean that $X$ is a positive definite (resp. semidefinite) Hermitian matrix.
In (3), 
one see that $\rank T_{\cal A}(X) = \dim {\cal A}U$ 
for the vector subspace $U$ spanned by column vectors of $g$ with $X = gg^\dagger$.
Hence, the condition (3) is equivalent to $\dim {\cal A} U \geq \dim U$ for all $U \leqslant \CC^n$. By Theorem~\ref{thm:FR}, we have:
\begin{Cor}[\cite{GGOW}]\label{cor:ncrank=n}
$\ncrank A = n$ if and only if ${\cal A}$ is scalable.
\end{Cor}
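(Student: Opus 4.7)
The plan is to chain Theorem~\ref{thm:FR} with the equivalence $(1) \Leftrightarrow (3)$ of Theorem~\ref{thm:Gurvits} via the standard dictionary between positive semidefinite matrices and subspaces of $\CC^n$, essentially formalizing the paragraph immediately preceding the corollary.

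First, I would observe that taking $U = 0$ in Theorem~\ref{thm:FR} yields $\dim {\cal A}U - \dim U = 0$, so the minimum in the Fortin--Reutenauer formula is always nonpositive; consequently $\ncrank A \leq n$, with equality if and only if $\dim {\cal A}U \geq \dim U$ for \emph{every} subspace $U \leqslant \CC^n$.

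Second, I would translate condition (3) of Theorem~\ref{thm:Gurvits} into the same combinatorial form. Any $X \succeq 0$ of rank $r$ admits a factorization $X = g g^{\dagger}$ with $g \in \CC^{n \times r}$ of full column rank; setting $U$ to be the column span of $g$ gives $\dim U = \rank X$. Meanwhile $T_{\cal A}(X) = \sum_{k=1}^m (A_k g)(A_k g)^{\dagger}$ is a sum of PSD matrices whose ranges are $A_k U$, so the range of $T_{\cal A}(X)$ equals $\sum_{k=1}^m A_k U = {\cal A}U$ and hence $\rank T_{\cal A}(X) = \dim {\cal A}U$. Conversely, every subspace $U \leqslant \CC^n$ is the column span of some such $g$, so the quantification over $X \succeq 0$ exhausts the quantification over $U \leqslant \CC^n$. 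Therefore condition (3) is equivalent to the combinatorial inequality $\dim {\cal A}U \geq \dim U$ for all $U \leqslant \CC^n$.

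Combining the two reformulations, $\ncrank A = n$ if and only if $\dim {\cal A}U \geq \dim U$ for every $U$, if and only if ${\cal A}$ is scalable. The argument is essentially a dictionary lookup once Theorems~\ref{thm:FR} and \ref{thm:Gurvits} are in hand; I do not anticipate any genuine obstacle, and the only point worth recording explicitly is the PSD--subspace correspondence $X \leftrightarrow \text{column span of } g$ (with $X = gg^{\dagger}$) that makes the two characterizations literally the same assertion.
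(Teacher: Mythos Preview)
Your proposal is correct and follows exactly the argument the paper gives in the paragraph preceding the corollary: translate condition~(3) of Theorem~\ref{thm:Gurvits} into the subspace inequality $\dim {\cal A}U \geq \dim U$ via the factorization $X = gg^{\dagger}$, and then invoke Theorem~\ref{thm:FR}. You have simply spelled out the PSD--subspace dictionary more carefully than the paper does, but the route is identical.
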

Thus, the optimization of the function in (2) links with  
the nc-nonsingularity determination of $A$, as did in~\cite{GGOW}.

The first main result of this paper is a more direct connection between nc-rank and scalability.
Let $\| \cdot \|_1$ denote the {\em trace norm}, that is, 
$\|X\|_1$ is the sum of singular values of $X$.
Then we prove that nc-corank is equal to one-half of 
the minimum residual of a scaling with respect to $\|\cdot\|_1$, 
generalizing Corollary~\ref{cor:ncrank=n}. 
\begin{Thm}\label{thm:main}
\begin{equation}\label{eqn:main}
n - \ncrank A = \frac{1}{2} \inf_{g,h \in GL_n}  \|T_{g^{\dagger}{\cal A}h} (I) - I\|_1 + \|T^*_{g^{\dagger}{\cal A}h} (I) - I\|_1.
\end{equation}
\end{Thm}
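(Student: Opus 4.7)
I would split the theorem into its two one-sided inequalities. The direction $n - \ncrank A \le \tfrac12\inf(\cdots)$ I would prove by a direct trace-norm argument using the Fortin--Reutenauer witness, while the matching upper bound requires realizing the residuals as a dual Finsler gradient of a geodesically convex function on $GL(n,\CC)/U_n$ and invoking the paper's general duality theorem.

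\textbf{Lower bound.} I would fix $U$ achieving the minimum in Theorem~\ref{thm:FR}, so $\dim U = d$ and $\dim\mathcal{A}U = d - k$ with $k := n - \ncrank A$. For any scaling $(g,h)$, setting $\tilde{\mathcal{A}} := g^{\dagger}\mathcal{A}h$, the subspaces $\tilde U := h^{-1}U$ and $\tilde U' := g^{\dagger}(\mathcal{A}U) = \tilde{\mathcal{A}}\tilde U$ preserve the dimensions $d$ and $d-k$. A unitary change of basis---which leaves the two residuals' trace-norms invariant because they transform by unitary conjugation---places $\tilde U$ and $\tilde U'$ in standard position, so each $\tilde A_j$ takes the block form $\bigl(\begin{smallmatrix} B_j & C_j \\ 0 & D_j \end{smallmatrix}\bigr)$ with $B_j \in \CC^{(d-k)\times d}$. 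Pinching $T_{\tilde{\mathcal{A}}}(I)-I$ along the $(d-k)\oplus(n-d+k)$ row split and $T^*_{\tilde{\mathcal{A}}}(I)-I$ along the $d\oplus(n-d)$ column split yields
\[
\|T_{\tilde{\mathcal{A}}}(I)-I\|_1 \ge \|X_1\|_1 + \|Y_1\|_1, \qquad \|T^*_{\tilde{\mathcal{A}}}(I)-I\|_1 \ge \|X_2\|_1 + \|Y_2\|_1,
\]
where $X_1 := \sum_j(B_jB_j^{\dagger} + C_jC_j^{\dagger}) - I_{d-k}$, $Y_1 := \sum_j D_jD_j^{\dagger} - I_{n-d+k}$, $X_2 := \sum_j B_j^{\dagger}B_j - I_d$, $Y_2 := \sum_j(C_j^{\dagger}C_j + D_j^{\dagger}D_j) - I_{n-d}$ are the diagonal blocks. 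A direct trace calculation gives $\trace X_1 - \trace X_2 = \sum_j\|C_j\|_F^2 + k$ and $\trace Y_2 - \trace Y_1 = \sum_j\|C_j\|_F^2 + k$, so $\|X\|_1 \ge |\trace X|$ combined with the triangle inequality force $\|X_1\|_1 + \|X_2\|_1 \ge k$ and $\|Y_1\|_1 + \|Y_2\|_1 \ge k$, summing to $\|T_{\tilde{\mathcal{A}}}(I)-I\|_1 + \|T^*_{\tilde{\mathcal{A}}}(I)-I\|_1 \ge 2k$.

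\textbf{Upper bound and main obstacle.} For the reverse inequality I would realize the residual as a dual Finsler gradient-norm on the symmetric space. The natural function is $f(Y) := \log\det T_{\mathcal{A}}(Y) - \log\det Y$ on $P_n \cong GL(n,\CC)/U_n$ (with a regularization $\mathcal{A} \rightsquigarrow (A_1,\ldots,A_m,\epsilon^{1/2}I)$, $\epsilon \to 0$, when $T_{\mathcal{A}}$ is rank-deficient); it is $GL_n$-equivariantly geodesically convex, and parameterizing $Y = hh^{\dagger}$ and taking $g$ with $gg^{\dagger} = T_{\mathcal{A}}(Y)^{-1}$ makes $T_{g^{\dagger}\mathcal{A}h}(I) = I$ while $T^*_{g^{\dagger}\mathcal{A}h}(I) - I = h^{\dagger}\nabla f(Y)h$, whose trace-norm $\|Y^{1/2}\nabla f(Y)Y^{1/2}\|_1$ is exactly the dual of the $GL_n$-invariant Finsler metric $\|v\|_Y = \|Y^{-1/2}vY^{-1/2}\|_{\infty}$ on $P_n$. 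The paper's general duality theorem then equates $\inf_Y \|\nabla f(Y)\|^*$ with the asymptotic slope of $f$ along its unbounded gradient flow at infinity; computing this slope along the geodesic ray generated by the Fortin--Reutenauer subspace (together with its $T^*$-dual counterpart) yields the value $2k$, closing the gap with the lower bound. The main obstacle is this general Finsler duality itself: for a lower-unbounded, geodesically convex function on $GL(n,\CC)/U_n$ with an invariant Finsler metric, identifying $\inf \|\nabla f\|^*$ with the asymptotic slope demands a suitable compactification of the symmetric space, identification of the limit direction of the gradient flow, and tight estimates on the dual Finsler gradient-norm along escaping trajectories---the technical heart of the paper.
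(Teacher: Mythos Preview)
Your overall architecture is exactly the paper's: weak duality via a trace-norm/pinching argument, and the reverse inequality by interpreting the residual as the dual $\|\cdot\|_{\infty}$-Finsler gradient of $f(Y)=\log\det T_{\cal A}(Y)-\log\det Y$ on $P_n$, then invoking the duality theorem to reduce to the asymptotic slope $f^{\infty}$, whose minimum over the $\|\cdot\|_{\infty}$-ball equals $2\min_U(\dim{\cal A}U-\dim U)$. Your lower-bound computation (block pinching plus the trace identities $\trace X_1-\trace X_2=\trace Y_2-\trace Y_1=\sum_j\|C_j\|_F^2+k$) is a clean variant of the paper's Lemma~\ref{lem:weakduality2}, which instead projects all the way to the diagonal and argues with the nonnegative matrix $C_{ij}=\sum_k|(B_k)_{ij}|^2$; both give the same bound.

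There is one genuine gap: your handling of the degenerate case ${\cal A}\CC^n\neq\CC^n$ via the regularization ${\cal A}_\epsilon=(A_1,\ldots,A_m,\epsilon^{1/2}I)$ does not work as stated. For every $\epsilon>0$ one has $\dim{\cal A}_\epsilon U\geq\dim U$ for all $U$, so $\ncrank A_\epsilon=n$ and (by the nondegenerate case you have already established) $\inf_{g,h}\bigl(\|T_{g^{\dagger}{\cal A}_\epsilon h}(I)-I\|_1+\|T^{*}_{g^{\dagger}{\cal A}_\epsilon h}(I)-I\|_1\bigr)=0$. The near-optimal scalings $(g_\epsilon,h_\epsilon)$ must therefore diverge as $\epsilon\to0$, and the perturbation term $\epsilon\, g_\epsilon^{\dagger}h_\epsilon h_\epsilon^{\dagger}g_\epsilon$ separating the ${\cal A}$-residual from the ${\cal A}_\epsilon$-residual is not controlled; you cannot pass to the limit to recover $2k$. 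Relatedly, in the degenerate case the normalization $T_{g^{\dagger}{\cal A}h}(I)=I$ you use to identify the gradient with the residual is impossible, since $T_{g^{\dagger}{\cal A}h}(I)$ is singular for every $g,h$. The paper avoids this by a different reduction: after a change of basis one writes $A_k=\left(\begin{smallmatrix}B_k&0\\0&0\end{smallmatrix}\right)$ with ${\cal B}\CC^{n'}=\CC^{n'}$ (or the dual), proves the identity $\min_{U\leqslant\CC^n}(\dim{\cal A}U-\dim U)=-(n-n')+\min_{V\leqslant\CC^{n'}}(\dim{\cal B}V-\dim V)$, and sandwiches the ${\cal A}$-infimum between the weak-duality lower bound and the ${\cal B}$-infimum (shifted by $2(n-n')$) obtained by restricting $g,h$ to block-diagonal form. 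Replacing your $\epsilon$-regularization with this block reduction closes the gap.
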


This relation (\ref{eqn:main}) is inspired by the duality theorem 
of the minimum gradient-norm of a geodesically convex function on a Hadamard manifold $M$,
due to Hirai and Sakabe~\cite{HiraiSakabe2024FOCS}. 
They established that the infimum of the gradient norm of a lower-unbounded convex function $f: M \to \RR$ 
is equal to the supremum of 
the negative of the {\em recession function} $f^{\infty}$ of $f$, 
and that these infimum and supremum are attained by the unbounded gradient-flow curve of $f$ at infinity.
In the operator scaling setting, 
the residual matrices $T_{g^{\dagger}{\cal A}h}(I) - I, T^*_{g^{\dagger}{\cal A}h}(I)-I$
of scaling $g^{\dagger}{\cal A}h$ can be interpreted 
as the (transported) gradient of a convex function $F$ on 
the product $P_n \times P_n$ of the manifold $P_n$ of positive definite matrices, 
where $F$ is a variant of the function in Theorem~\ref{thm:Gurvits}~(2).
From this, 
they deduced a formula~\cite[Theorem 4.19]{HiraiSakabe2024FOCS} of the scalability limit with respect to 
the Frobenius norm, i.e., the minimum possible $\epsilon > 0$ in (\ref{eqn:epsilon}).

Theorem~\ref{thm:main} 
is viewed as a trace-norm version, and is
obtained by extending their gradient-flow approach.
To capture estimates by the trace norm $\| \cdot \|_1$,
we consider a general $GL_n$-invariant norm on symmetric space 
$P_n \simeq GL_n/U_n$, 
regard it as a Finsler manifold, 
and consider the ``gradient flow" of the function in Theorem~\ref{thm:Gurvits}~(2). 
However, the norm in our setting may be non-differentiable, 
and the gradient flow cannot be defined directly.  
We then utilize a general framework~(Ambrosio, Gigli, and Savar\'e~\cite{AGS_GradientFlows}) of gradient flows in metric spaces, 
without differential structure,  
and establish a duality theorem (Theorem~\ref{thm:duality}) of the minimum gradient norm of a convex function on $P_n$ 
with respect to an invariant Finsler metric, 
which is placed as the second main result of this paper.
Then, by adapting this duality, we obtain Theorem~\ref{thm:main}.
Such an application of nondifferentiable gradient flows has never been seen before  
and is interesting in its own right.
Discretizing this gradient flow to develop a new algorithm 
and analysis for nc-rank and operator scaling 
is an important future research. 
Generalizing the presented duality to more general Finsler spaces
is also an interesting research; 
Berwald spaces may be candidates.

In Section~\ref{sec:preliminaries}, 
we introduce necessary backgrounds on gradient flows in metric spaces and 
symmetric space $P_n$ endowed with an invariant Finsler metric.
In Section~\ref{sec:duality}, we establish the duality theorem (Theorem~\ref{thm:duality}) 
of the minimum gradient-norm of a convex function. 
In Section~\ref{sec:proof}, we complete the proof of Theorem~\ref{thm:main}.  

\section{Preliminaries}\label{sec:preliminaries}
Let $\RR$ and $\RR_+$ denote the sets of real and nonnegative real numbers, respectively. 
Let ${\bf 1} = (1,1,\ldots,1)$ denote the all-one vector. 
For a positive integer $k$, let $[k] := \{1,2,\ldots,k\}$.
For an $n \times n$ matrix $A$, let $e^{A}$ denote the matrix exponential.
For a positive definite matrix $X$, let $\log X$ denote the unique matrix $Y$ with 
$X =e^{Y}$. For a vector $\lambda \in \RR^n$, let $\diag \lambda$ denote the $n \times n$ diagonal matrix whose diagonals are given by $\lambda_1,\lambda_2,\ldots,\lambda_n$ in order.

\subsection{Outline by Euclidean specialization}\label{subsec:outline}
As mentioned, our main result (Theorem~\ref{thm:main})
is deduced from a duality relation (Theorem~\ref{thm:duality}) of 
the minimum gradient-norm of a convex function on a certain Finsler manifold. 
Here we briefly explain a Euclidean specialization of this duality
and its proof method using gradient flows.
This will give motivation to several concepts and properties introduced 
in subsequent sections. 

Let $f:\RR^n \to \RR$ be a smooth convex function.
Our duality relation (Theorem~\ref{thm:duality}) is a manifold generalization of the following identity:
\begin{equation}\label{eqn:strong_duality_Euclidean}
\inf_{x\in \RR^n} \|\nabla f(x)\| = \sup_{u\in \RR^n:\|u\|\leq 1} - f^{\infty}(u),
\end{equation}
where $f^{\infty}:\RR^n \to \RR \cup \{\infty\}$ is the {\em recession function} of $f$ defined by
\begin{equation}\label{eqn:recession}
f^{\infty}(u) := \lim_{t \to \infty} \frac{f(x+tu)-f(x)}{t} \quad (u \in \RR^n).
\end{equation}
It is known that the $f^{\infty}$ is determined independently of $x \in \RR^n$; see \cite{Rockafellar} for the recession function.
Then the duality (\ref{eqn:strong_duality_Euclidean}) is seen by elementary convex analysis:
The LHS of (\ref{eqn:strong_duality_Euclidean}) is interpreted as the minimum-norm point problem 
on the closure of the gradient space 
$\overline{\nabla f(\RR^n)}$, which is written as 
$\overline{\nabla f(\RR^n)} = \{p \in \RR^n \mid \langle u,p\rangle \leq f^{\infty}(u)\ (u \in \RR^n:\|u\|=1)\}$.
Then (\ref{eqn:strong_duality_Euclidean}) is a consequence of e.g., Fenchel duality; 
see \cite[Section 3.3]{HiraiSakabe2024FOCS} for detail.

Although the relation (\ref{eqn:strong_duality_Euclidean})
naturally extends to manifold settings (as we will see), 
this proof method does not.    
Hirai and Sakabe~\cite{HiraiSakabe2024FOCS} developed 
the following constructive proof of (\ref{eqn:strong_duality_Euclidean}) that works on Hadamard manifolds.  
First, observe weak duality in (\ref{eqn:strong_duality_Euclidean}): 
\begin{description}
    \item[{\rm Weak duality:}] $\|\nabla f(x)\| \geq  - f^{\infty}(u)$ $(\forall x\in \RR^n, \forall u \in \RR^n:\|u\|\leq 1)$.
\end{description}
Indeed, $f^{\infty}(u) \geq \lim_{t \to 0} (f(x+tu)-f(x))/t = \langle \nabla f(x),u\rangle \geq - \|\nabla f(x)\|$, where the first inequality follows from convexity of $f$ and the second from Cauchy-Schwarz.

We construct $x,u$ attaining (\ref{eqn:strong_duality_Euclidean})
by the limit of the {\em gradient flow} of $f$:
Consider the initial value ODE problem
\begin{equation}
\dot x(t) = -\nabla f(x(t)), \quad x(0) = x_0,
\end{equation}
and its solution $x(t)$.
The construction utilizes the following well-known properties: 
\begin{description}
\item[{\rm Energy identity:}] $\displaystyle f(x(t)) - f(x_0) = - \int_{0}^t \|\nabla f(x(s))\|^2 ds$.
\item[{\rm Monotonicity:}] $t \to \|\nabla f(x(t))\|$ is nonincreasing.
\item[{\rm Triangle inequality:}] $\displaystyle \|x(t)-x_0\| \leq \int_{0}^t \|\dot x(s)\| ds = \int_{0}^t \|\nabla f(x(s))\| ds$.
\end{description}
Energy identity follows from integrating $\frac{d}{dt}f(x(t)) = \langle \nabla f(x(t)),\dot x(t)\rangle = - \|\nabla f(x(t))\|^2$. 
Monotonicity follows from $\frac{d}{dt} \|\nabla f(x(t))\|^2 
= - \langle \dot x(t),\nabla^2f(x(t))\dot x(t) \rangle \leq 0$, where the Hessian $\nabla^2f(x(t))$ is positive semidefinite due to convexity of $f$.
Triangle inequality follows from 
$\|x(t)-x_0\| = \| \int_{0}^t \dot x(s) ds\| \leq \int_{0}^t \|\dot x(s)\| ds$.
By the monotonicity, the length of gradient-flow curve $x(t)$ $(x \in [0,T))$ is finite for finite $T$.
This verifies that $x(t)$ is defined for $[0,\infty)$.

Now (\ref{eqn:strong_duality_Euclidean}) is proved as follows.
We may assume that $\kappa := \inf_{x \in \RR^n} \|\nabla f(x)\| > 0$.
Let $x_0 := 0$ (for simplicity). 
Consider the gradient-flow curve $x(t)$ $(t \in [0,\infty))$.
From energy identity it holds $f(x(t)) \to -\infty$ and then $\|x(t)\| \to \infty$.
Let $u(t) := x(t)/\|x(t)\|$. Then, for $r \leq \|x(t)\|$, we have 
\begin{eqnarray*}
\frac{f(ru(t))- f(0)}{r} &\leq& \frac{f(x(t))-f((0))}{\|x(t)\|} 
\leq - \frac{\int_{0}^t \|\nabla f(x(s))\|^2ds}{\int_{0}^t \|\nabla f(x(s))\|ds} \\
&\leq& - \frac{1}{t} \int_{0}^{t} \|\nabla f(x(s))\|ds  \leq  \lim_{t \to \infty} -\|\nabla f(x(t))\|,
\end{eqnarray*}
where the first inequality follows from convexity of $f$ 
along the line between $0$ and $x(t)$,
the second from energy identity and triangle inequality,
the third from Cauchy-Schwarz, and the fourth from monotonicity.  
Let $u^*$ be an accumulation point of $u(t)$.
Then $(f(ru^*)-f(0))/r \leq \lim_{t \to \infty} -\|\nabla f(x(t))\|$. By $r \to \infty$, 
we have $f^{\infty}(u^*) \leq \lim_{t \to \infty} -\|\nabla f(x(t))\|$.  With weak duality, we have
\[
\inf_{x\in \RR^n} \|\nabla f(x)\| \geq \sup_{u \in \RR^n:\|u\|\leq 1} - f^{\infty}(u) 
\geq - f^{\infty}(u^*) \geq \lim_{t \to \infty} \|\nabla f(x(t))\| \geq \inf_{x \in \RR^n} \|\nabla f(x)\|.
\]
All the inequalities must hold in equality. 
Moreover, $u(t)$ converges since the minimizer of $f^{\infty}$ over unit sphere is unique.
We then obtain a constructive strong duality by unbounded gradient flow~$x(t)$:
\[
\lim_{t \to \infty} \|\nabla f(x(t))\| = \inf_{x \in \RR^n} \|\nabla f(x)\| = \sup_{u \in \RR^n:\|u\|\leq 1}-f^{\infty}(u) 
= -f^{\infty} \left( \lim_{t \to \infty}\frac{x(t)}{\|x(t)\|} \right).
\]
The subsequent arguments are motivated by extending 
this approach.

\subsection{Gradient flow in metric space}
We follow \cite[Part I]{AGS_GradientFlows}.
Let $(M,d)$ be a complete metric space.
An {\em absolutely continuous curve} 
is a curve $\gamma:(a,b) \to M$ (with possibly $b =\infty$) such that 
there is a Lebesgue integrable function 
$m:(a,b) \to \RR$ such that
\begin{equation}\label{eqn:AC}
d(\gamma(s),\gamma(t)) \leq \int_{s}^t m(r) dr
\end{equation}
for any $s,t$ with $a < s \leq t < b$.
A {\em locally absolutely continuous curve} 
is a curve such that each point on the curve 
has an open interval on which the curve is absolutely continuous.
Note that the absolute continuity is stronger than uniform continuity.
For a (locally) absolutely continuous curve $\gamma:(a,b) \to M$, 
the {\em metric derivative} $|\gamma'|:(a,b) \to \RR_+$ is defined as 
\begin{equation}
    |\gamma'|(t)  := \lim_{s \to t} \frac{d(\gamma(s),\gamma(t)))}{|s -t| } \quad (t \in (a,b)).
\end{equation}
The limit exists for almost everywhere $t \in (a,b)$. The metric derivative $|\gamma'|$ can be chosen
as $m$ in the above (\ref{eqn:AC}).
Particularly, it holds
\begin{equation}
d(\gamma(s),\gamma(t)) \leq \int_{s}^{t} |\gamma'|(t) dt \quad (a < s \leq t < b). 
\end{equation}
This inequality can be interpreted as the triangular inequality if the right-hand side is defined as the length of the curve.

A $d$-geodesic is a curve $\gamma:[a,b] \to M$ such that
\begin{equation}
   d(\gamma(s),\gamma(t)) = \frac{|s-t|}{|a-b|} d(\gamma(a),\gamma(b)) \quad (a \leq s \leq t \leq b). 
\end{equation}
By a {\em $d$-geodesic ray} we mean a curve $\gamma:[a,\infty) \to M$ 
such that the restriction to every finite interval is a $d$-geodesic. 
A function $f:M \to \RR$ is called {\em (geodesically) $d$-convex} if for any $x,y \in M$ 
there exists a geodesic $\gamma:[0,1] \to M$ with $\gamma(0) = x$ and $\gamma(1) = y$ such that
\begin{equation}
 f(\gamma(t)) \leq (1-t) f(x) + tf(y) \quad (t \in [0,1]).
\end{equation}
The (global) {\em slope} $|\partial f|:M \to \RR_+$ of a convex function $f$ is defined by
%
\begin{eqnarray}
|\partial f|(x) &:= & \sup_{y \in M \setminus \{x\}} \frac{\max \{0, f(x) - f(y)\}}{d(x,y)} \nonumber \\
&=& \sup_{\gamma} \lim_{t \to 0}  \frac{\max \{0, f(\gamma(0)) - f(\gamma(t))\}}{t} \quad (x \in M), \label{eqn:slope} 
\end{eqnarray}
where the supremum in (\ref{eqn:slope}) is taken over all geodesics $\gamma:[0,a] \to M$ 
such that $\gamma(0) = x$ and $a = d(\gamma(0),\gamma(a)) > 0$, 
and the equality follows from convexity of $f$.

A {\em curve of maximal slope (a.k.a. gradient-flow curve)} of $f$ with initial point $x_0 \in M$ is 
a locally absolutely continuous curve $x: (0,\infty) \to M$ 
satisfying $\lim_{t \to +0}x(t) = x_0$ and
\begin{equation}
\frac{d}{dt} f(x(t)) \leq - \frac{1}{2} |x'|^2(t) - \frac{1}{2} |\partial f|^2(x(t)) \quad ({\rm a.e.}\ t \in (0,\infty)).
\end{equation}
It is known \cite[Remark 1.3.3]{AGS_GradientFlows} that the inequality becomes equality a.e. and it holds
\begin{equation}\label{eqn:equal}
|x'|^2(t) = |\partial f|^2(x(t)) = - \frac{d}{dt} f(x(t)) \quad ({\rm a.e.}\ t \in (0,\infty)),
\end{equation}
and the the following energy identity holds
\begin{equation}\label{eqn:energy}
f(x(b)) - f(x(a)) =  -\int_{a}^b  \frac{1}{2} |x'|^2(t) + \frac{1}{2} |\partial f|^2(x(t)) dt 
\quad (0 \leq a \leq b < \infty).
\end{equation}
A curve of maximal slope of $f$ is constructed by the limit of the implicit Euler scheme,
called a {\em generalized minimizing movement}~\cite[Chapter 2]{AGS_GradientFlows}. 
\begin{Thm}[{\cite[Theorem 2.4.15]{AGS_GradientFlows}}]\label{thm:GMM}
    Suppose that $f: M \to \RR$ is lower-semicontinuous $d$-convex and 
    every bounded subset of a sublevel set of $f$ is relatively compact. Then, for every $x_0 \in M$, there exists a curve of maximal slope $x:(0,\infty) \to M$ of $f$ with $\lim_{t \to +0}x(t)= x_0$ such that 
    \begin{itemize}
    \item[{\rm (1)}] $t \mapsto f(x(t))$ is convex, and 
    \item[{\rm (2)}] $t \mapsto |\partial f|(x(t))$ is nonincreasing and right continuous.
    \end{itemize}
\end{Thm}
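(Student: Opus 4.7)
The plan is to construct the curve of maximal slope via the De Giorgi minimizing movement (implicit Euler) scheme and then verify properties (1)--(2) from the construction. Fix a time step $\tau > 0$ and define recursively
\[
x_k^{\tau} \in \argmin_{y \in M} \left\{ f(y) + \frac{1}{2\tau} d(y, x_{k-1}^{\tau})^2 \right\}, \qquad x_0^{\tau} := x_0.
\]
Existence of each minimizer follows from lower semicontinuity of $f$ together with the hypothesis that bounded subsets of sublevel sets of $f$ are relatively compact, since any minimizing sequence for the $\tau$-Moreau--Yosida problem lies in a sublevel set of $f$. The minimizing property immediately yields the telescoping energy inequality
\[
f(x_k^{\tau}) + \frac{1}{2\tau}\sum_{j=1}^k d(x_j^{\tau},x_{j-1}^{\tau})^2 \leq f(x_0),
\]
and, by comparing $x_k^{\tau}$ against points along a $d$-geodesic emanating from $x_k^{\tau}$ and invoking $d$-convexity of $f$, the discrete slope estimate $|\partial f|(x_k^{\tau}) \leq d(x_k^{\tau}, x_{k-1}^{\tau})/\tau$.

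Next, I would introduce the piecewise constant interpolant $\bar{x}^{\tau}$ and the De Giorgi variational interpolant $\tilde{x}^{\tau}$ of the sequence $\{x_k^{\tau}\}$. The energy inequality bounds the total squared metric speed on every finite time interval, so the compactness assumption together with a Helly-type selection principle yields a subsequence $\tau_n \to 0$ and a locally absolutely continuous limit curve $x: (0,\infty) \to M$ with $x(t) \to x_0$ as $t \to 0^+$. The central step is to pass the discrete energy identity to the limit: by lower semicontinuity of the metric derivative $|\gamma'|$ under pointwise convergence of curves, lower semicontinuity of the slope $|\partial f|$ along converging sequences (which follows from $d$-convexity of $f$), and Fatou's lemma, one obtains
\[
f(x(b)) - f(x(a)) \leq -\frac{1}{2}\int_a^b |x'|^2(t) + |\partial f|^2(x(t))\,dt.
\]
The reverse inequality is automatic because $|\partial f|$ is a strong upper gradient for the convex function $f$ (via the chain-rule/Cauchy--Schwarz estimate along $x$). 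Together this shows that $x$ is a curve of maximal slope and that (\ref{eqn:energy}) and (\ref{eqn:equal}) hold.

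For property (1), I would show that the constructed curve additionally satisfies the Evolution Variational Inequality EVI$_0$, which is inherited from the discrete variational scheme via $d$-convexity of $f$ along $d$-geodesics issuing from $x(t)$; a standard consequence of EVI$_0$ is that $t \mapsto f(x(t))$ is nonincreasing and convex. For property (2), monotonicity of $t \mapsto |\partial f|(x(t))$ is equivalent, by (\ref{eqn:equal}), to convexity of $f(x(t))$, hence follows from (1); right continuity then comes from lower semicontinuity of $|\partial f|$ along the flow combined with the just-established monotonicity. The main obstacle, as I see it, is the simultaneous limit passage of the discrete metric speed and the discrete slope while preserving the \emph{equality} in the energy identity rather than a mere inequality; this is precisely what forces one to work with the De Giorgi variational interpolant rather than a naive piecewise geodesic, and to exploit the sharp lower-semicontinuity properties of $|\partial f|$ that are available only because $f$ is $d$-convex. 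A secondary delicate point is attainment of the initial datum as $t \to 0^+$, which requires a refined estimate showing $f(x(t)) \to f(x_0)$ beyond mere metric convergence.
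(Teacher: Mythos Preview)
The paper does not give a proof of this theorem at all; it is quoted verbatim from \cite[Theorem 2.4.15]{AGS_GradientFlows} and used as a black box. So there is no ``paper's own proof'' to compare against. Your outline is essentially the standard minimizing-movement argument from that reference, and is on the right track.

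One genuine caution: your derivation of property~(1) via the Evolution Variational Inequality EVI$_0$ overshoots the hypotheses. In a bare metric space, $d$-convexity of $f$ alone does \emph{not} guarantee that the minimizing-movement curve satisfies EVI$_0$; that typically requires an additional curvature-type assumption on $(M,d)$ (e.g.\ an NPC/CAT(0) condition), which is not part of the statement as given. The argument actually used in \cite{AGS_GradientFlows} avoids EVI here: one shows directly at the discrete level that $d$-convexity forces the slopes to be monotone, $|\partial f|(x_k^\tau) \leq |\partial f|(x_{k-1}^\tau)$, and this survives the limit (by the lower semicontinuity of $|\partial f|$ you already invoked) to give~(2). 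Property~(1) then follows from~(2) together with the energy identity~(\ref{eqn:equal}), since $\tfrac{d}{dt} f(x(t)) = -|\partial f|^2(x(t))$ is nondecreasing. So the logical order is the reverse of what you wrote: (2) first, then (1), and EVI is not needed.
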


\subsection{Symmetric space with invariant Finsler metric}

Let $v$ be a norm on $\RR^n$, 
that is, it satisfies $v(x) \geq 0$, $v(x) = 0 \Leftrightarrow x = 0$, $v(\alpha x) = |\alpha| v(x)$, and $v(x+y) \leq v(x) + v(y)$ for $x,y \in \RR^n, \alpha \in \RR$. 
The dual space $(\RR^n)^*$ is identified with $\RR^n$ by 
$x \mapsto x^{\top}y$ $(y \in \RR^n)$.
The dual norm $v^*$ of $v$ is defined by 
$v^*(x) := \max \{ x^{\top}y \mid y \in \RR^n:v(y) \leq 1\}$ for  $x \in \RR^n$.
Suppose further that $v$ is invariant under coordinate permutations, that is, 
it holds $v(x) = v(\sigma x)$
for every $x \in \RR^n$ and every permutation matrix $\sigma$.
Then, the dual norm $v^*$ is also invariant under coordinate permutations.

Let $S_n$ denote the real vector space of $n \times n$ Hermitian matrices.
The dual space of $S_n$ is identified with $S_n$ by $X \mapsto \trace X Y$ $(Y \in S_n)$.
Let $U_n$ denote the group of $n \times n$ unitary matrices. 
For a permutation invariant norm $v$ on $\RR^n$, 
we obtain a unitarily invariant norm $\| \cdot \|_v$ on $S_n$:
\begin{equation}
\|H \|_v := v(\lambda) \quad (H \in S_n),
\end{equation}
where $H = u \diag \lambda u^\dagger$ for $\lambda \in \RR^n$ and $u \in U_n$.
The dual norm $\|\cdot \|^*_v$ of $\| \cdot \|_v$ 
is defined by $\|H \|^*_{v} := \max \{ \trace H X \mid X \in S_n:\|X\|_v \leq 1\}$.
The diagonal projection $D: S_n \to \RR^n$ is defined by
\begin{equation}\label{eqn:diagonal}
D(H)_{i} := H_{ii} \quad (H \in S_n, i \in [n] ). 
\end{equation}
The following are well-known, and can be found in \cite{Davis1957,Lewis1996} (for unitarily invariant convex functions).
\begin{Lem}\label{lem:norm}
\begin{itemize}
\item[{\rm (1)}] $\|\cdot \|_v$ is a norm on $S_n$.
\item[{\rm (2)}] $\|\cdot \|_v^* = \|\cdot \|_{v^*}$.
\item[{\rm (3)}] $\| H \|_v \geq v(D(H))$ for $H \in S_n$.
\end{itemize}
\end{Lem}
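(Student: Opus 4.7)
The three claims are all classical consequences of spectral majorization, and my plan is to dispatch each by combining a single spectral inequality with the fact that any permutation-invariant convex function on $\RR^n$ is Schur-convex (Schur's theorem, applicable here because a norm is automatically convex). The three ingredients I would call upon are Ky Fan's inequality for (1), von Neumann's trace inequality for (2), and the Schur--Horn theorem for (3).

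For (1), positive definiteness and positive homogeneity are immediate: $v(\lambda(H)) = 0 \iff \lambda(H) = 0 \iff H = 0$, and $\lambda(\alpha H) = \alpha \lambda(H)$ up to reordering, so $\|\alpha H\|_v = |\alpha|\|H\|_v$ follows from permutation invariance together with $v(-x) = v(x)$. The real content is the triangle inequality. Ky Fan's inequality $\sum_{i=1}^k \lambda_i^\downarrow(H+K) \leq \sum_{i=1}^k \lambda_i^\downarrow(H) + \sum_{i=1}^k \lambda_i^\downarrow(K)$, combined with $\trace(H+K) = \trace H + \trace K$, gives the majorization $\lambda^\downarrow(H+K) \prec \lambda^\downarrow(H) + \lambda^\downarrow(K)$. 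Schur-convexity of $v$ turns this into $v(\lambda(H+K)) \leq v(\lambda^\downarrow(H) + \lambda^\downarrow(K))$, and subadditivity of the norm $v$ on $\RR^n$ then finishes the estimate. I expect this to be the most delicate step, because only permutation invariance (not the usual sign-permutation invariance of a symmetric gauge function) is available, so one has to rely on Schur's theorem rather than a direct manipulation of $|\lambda_i|$.

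For (2), I would reduce the dual $\|H\|_v^{*} = \sup\{\trace HX : \|X\|_v \leq 1\}$ to a purely spectral problem. Von Neumann's trace inequality $\trace HX \leq \sum_i \lambda_i^\downarrow(H)\, \lambda_i^\downarrow(X)$, plus the observation that $\|X\|_v \leq 1$ depends only on the unordered spectrum of $X$, lets the supremum collapse (via permutation invariance of $v$) to $\sup\{\lambda(H)^{\top} y : v(y) \leq 1\} = v^{*}(\lambda(H))$, giving $\|H\|_v^{*} \leq \|H\|_{v^{*}}$. The reverse inequality is obtained by diagonalizing $H$ and choosing $X$ to have the same eigenvectors with eigenvalues equal to a maximizer of this linear program, which achieves equality in von Neumann's bound and shows the sup is attained. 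For (3), the Schur--Horn theorem gives the majorization $D(H) \prec \lambda(H)$, and Schur-convexity of $v$ then yields $v(D(H)) \leq v(\lambda(H)) = \|H\|_v$ at once. All the ingredients (Ky Fan, von Neumann, Schur--Horn, Schur-convexity of symmetric convex functions) are contained in the cited works of Davis and Lewis, so the write-up would largely amount to assembling them.
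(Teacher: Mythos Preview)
Your proposal is correct. For (3) you and the paper coincide: both invoke the Schur--Horn theorem to place $D(H)$ in the permutation polytope of $\lambda(H)$ and then use convexity/permutation invariance of $v$. For (2) the two arguments are close cousins: you bound $\trace HX$ by the Hermitian von~Neumann inequality $\trace HX \le \lambda^{\downarrow}(H)^{\top}\lambda^{\downarrow}(X)$, whereas the paper diagonalizes $H$ and observes that only the diagonal of $u^{\dagger}Xu$ enters, then applies Schur--Horn to that diagonal---essentially the same mechanism packaged differently. The genuine divergence is in (1). You appeal to Ky~Fan's inequality to get the majorization $\lambda^{\downarrow}(X+Y)\prec \lambda^{\downarrow}(X)+\lambda^{\downarrow}(Y)$ and then Schur-convexity of $v$. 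The paper instead bootstraps from (3): diagonalize $Z=X+Y=u(\diag\lambda)u^{\dagger}$, set $\tilde X=u^{\dagger}Xu$, $\tilde Y=u^{\dagger}Yu$, note that $\lambda = D(\tilde X)+D(\tilde Y)$, and apply (3) to each summand to get $\|Z\|_v = v(\lambda) \le v(D(\tilde X))+v(D(\tilde Y)) \le \|\tilde X\|_v+\|\tilde Y\|_v$. Your route is the textbook one and makes the majorization structure explicit; the paper's route is a bit slicker in that it avoids invoking Ky~Fan as a separate ingredient and derives everything from Schur--Horn alone.
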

We give a proof in the Appendix for convenience.
If $v$ is the $l_p$-norm $\| \cdot \|_p$ for $p \in [1,\infty]$, 
then $\|\cdot \|_v$, denoted by $\| \cdot \|_p$, is known as the {\em Schatten $p$-norm}.
Our particular interest is the trace norm $\|\cdot \|_1$, which is the dual norm of spectral norm $\| \cdot \|_{\infty}$ (by above (2)).
\begin{equation}
\|\cdot \|_1 = \|\cdot \|_{\infty}^*. 
\end{equation}

We next consider the manifold $P_n \subseteq S_n$ of positive definite matrices. 
We utilize elementary notions of Riemannian geometry; see e.g., \cite{Boumal_Book,Sakai1996}. 
The tangent space $T_X$ at each point $X \in P_n$ is identified with $S_n$, and the cotangent space $T_X^*$ 
is also identified with $S_n$ by $F \mapsto \trace FH$ $(H \in T_X = S_n)$.
A differentiable function $f: P_n \to \RR$ gives rise 
to a point $df(X) \in  T_X^*$ by  $df(X)(H) := \frac{d}{dt} \mid_{t = 0} f(\gamma(t))$, where $\gamma:[0,a) \to P_n$ is
an arbitrary smooth curve with $\gamma(0) = X$ and $\dot \gamma(0) = H \in T_X$.
It is called the {\em differential} of $f$ at $X$. 
The standard Riemannian metric $\langle ,\rangle$ on $P_n$
is defined by $\langle H,H' \rangle_X := \trace X^{-1} H X^{-1} H'$ for $X \in P_n$ and $H,H' \in T_X$.
The corresponding norm is particularly denoted by $\|\cdot \|_{2,X} := \sqrt{\langle \cdot, \cdot\rangle_{X}}$. 
$GL_n$ acts isometrically and transitively on $P_n$ by $(g,X) \mapsto gXg^\dagger$. 
The stabilizer at $I$ is the unitary group $U_n$. 
So $P_n$ is viewed as the symmetric space $GL_n/U_n$.
This is a representative example of {\em Hadamard manifolds}, 
that is, simply-connected Riemannian manifolds having 
nonpositive sectional curvature everywhere; see \cite[Part II, Chapter 10]{BridsonHaefliger1999}.
The Riemannian length of a curve $\gamma:[a,b] \to P_n$ is defined as 
$\int_{a}^b \|\dot \gamma (t)\|_{2,\gamma(t)} dt$, where 
$\dot \gamma(t) \in T_{\gamma(t)}$ denotes the tangent vector of $\gamma$. 
The distance $d_2(X,Y)$ between $X$ and $Y$ 
is the infimum of the length of a curve $\gamma:[a,b] \to M$ with $\gamma (a) = X$ and $\gamma(b)=Y$. 
It is known that the infimum is (uniquely) attained by a curve with form $t \mapsto g e^{t H} g^{\dagger}$ that is a $d_2$-geodesic.
Specifically, for $H \in T_X$ the unique $d_2$-geodesic ray $t \mapsto \gamma_{X,H}(t)$ with $\gamma_{X,H}(0) = X$ and $\dot \gamma_{X,H}(0) = H$
is given by
\begin{equation}
\gamma_{X,H}(t) = X^{1/2} e^{-t X^{-1/2}HX^{-1/2}} X^{1/2} \quad (t \in [0,\infty)).
\end{equation}
$\|H\|_{2,X}$ is called the {\em speed} of $\gamma_{X,H}$. 
If $\|H\|_{2,X} = 1$, then $\gamma_{X,H}$ is called a {\em unit-speed} $d_2$-geodesic ray.
The distance $d_2(X,Y)$ between two points $X,Y\in P_n$ is explicitly calculated by
\begin{equation}
d_2(X,Y) = d_2(I, X^{-1/2}YX^{-1/2}) = \| \log X^{-1/2} Y X^{-1/2} \|_2. 
\end{equation}

Fix a unitarily invariant norm $\|\cdot\|_v$ on $S_n = T_I$. 
We introduce a $GL_n$-invariant Finsler metric on $P_n$ as follows. 
For each $X \in P_n$,
define norm $\|\cdot\|_{v,X}$ on $T_X$ by
\begin{equation}
\|H\|_{v,X} :=  \|X^{-1/2} H X^{-1/2} \|_v \quad (H \in T_X).
\end{equation}
The dual norm  $\|\cdot\|^*_{v,X}$ on $T_X^*$ is given by
\begin{equation}
\|F\|^*_{v,X} := \|X^{1/2} F X^{1/2} \|_v^* \quad (F \in T_X^*).
\end{equation}
Observe that the norm $\| \cdot \|_v$ is invariant under the action $(g,X) \mapsto gXg^{\dagger}$.
As in the Riemannian case, the length $L[\gamma]$ of a curve $\gamma:[a,b] \to P_n$ is defined by
\begin{equation}
    L[\gamma] := \int_{a}^b \|\dot \gamma(t)\|_{v, \gamma(t)} dt.
\end{equation}
The distance $d_v(X,Y)$ between two points $X,Y$ 
is defined as the infimum of a curve $\gamma:[a,b] \to M$
connecting $X = \gamma(a)$ and $Y = \gamma(b)$.
Now we obtain a metric space $(P_n,d_v)$. 
Bhatia~\cite{Bhatia2003} and
Friedland and Freitas~\cite{FriedlandFreitas2004} studied this metric space and characterized the metric structure, as follows.
\begin{Thm}[{\cite[(23)]{Bhatia2003} and \cite[Theorem 3.2]{FriedlandFreitas2004}; see also \cite[Section 6.4]{Bhatia_PositiveDefiniteMatrices}}]\label{thm:Finsler}
$(P_n,d_v)$ is a complete geodesic metric space, where
$d_v$ is given by
\begin{equation}
d_v(X,Y) = d_v(I, X^{-1/2}YX^{-1/2}) = \| \log (X^{-1/2} Y X^{-1/2}) \|_{v}.
\end{equation}
\end{Thm}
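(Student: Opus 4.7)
The plan is to exploit the $GL_n$-invariance of the Finsler structure to reduce to the base point $X = I$, construct an explicit length-realizing curve for the upper bound, and for the lower bound reduce everything to a contraction inequality asserting that the matrix logarithm $\log \colon (P_n, d_v) \to (S_n, \|\cdot\|_v)$ is non-expanding.

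The norm $\|\cdot\|_{v,X}$ is by definition invariant under the $GL_n$-action $Z \mapsto gZg^{\dagger}$, so the length functional $L[\cdot]$ and hence $d_v$ inherit this invariance. Choosing $g = X^{-1/2}$ yields the first equality $d_v(X,Y) = d_v(I, X^{-1/2}YX^{-1/2})$, and we are reduced to proving $d_v(I, P) = \|\log P\|_v$ for $P \in P_n$. For the upper bound, the curve $\gamma(t) := e^{t \log P}$ on $[0,1]$ satisfies $\dot\gamma(t) = \gamma(t)^{1/2}(\log P)\gamma(t)^{1/2}$ because $\log P$ commutes with every $\gamma(t)$, so $\gamma(t)^{-1/2} \dot\gamma(t) \gamma(t)^{-1/2} = \log P$ and $L[\gamma] = \|\log P\|_v$. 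The same curve will realize the distance, yielding the geodesic property.

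The crux is the contraction lemma: for every $X \in P_n$ and $H \in S_n$,
\[
\|d\log(X)[H]\|_v \;\leq\; \|X^{-1/2}HX^{-1/2}\|_v .
\]
Granting this, any absolutely continuous curve $\gamma \colon [0,1] \to P_n$ with $\gamma(0) = I$, $\gamma(1) = P$ produces the absolutely continuous curve $\beta := \log \circ\, \gamma$ in $S_n$ with $\|\dot\beta(t)\|_v \leq \|\dot\gamma(t)\|_{v,\gamma(t)}$, so
\[
\|\log P\|_v \;=\; \|\beta(1) - \beta(0)\|_v \;\leq\; \int_0^1 \|\dot\beta(t)\|_v \, dt \;\leq\; L[\gamma],
\]
and $d_v(I,P) \geq \|\log P\|_v$ follows by taking the infimum over $\gamma$. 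To prove the lemma, by unitary invariance of $\|\cdot\|_v$ I may assume $X = \diag \mu$ with $\mu > 0$. The Daleckii--Krein formula gives $d\log(X)[H]_{ij} = \tfrac{\log \mu_i - \log\mu_j}{\mu_i - \mu_j}\, H_{ij}$; substituting $\tilde H := X^{-1/2} H X^{-1/2}$ and $s_{ij} := (\log\mu_i - \log\mu_j)/2$, a short calculation converts this into the Hadamard product $d\log(X)[H] = M \circ \tilde H$ with $M_{ij} = s_{ij}/\sinh s_{ij}$ (interpreted as $1$ on the diagonal). The matrix $M$ has all diagonal entries equal to $1$ and is positive semidefinite, since $x \mapsto x/\sinh x$ admits the Weierstrass product $\prod_{k \geq 1}\bigl(1 + x^2/(k\pi)^2\bigr)^{-1}$ whose factors are Cauchy-type positive-definite kernels and Schur products of PSD kernels are PSD. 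The classical Schur-multiplier contraction then gives $\|M \circ \tilde H\|_v \leq \|\tilde H\|_v = \|X^{-1/2} H X^{-1/2}\|_v$, as required.

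Completeness is essentially free: since all norms on the finite-dimensional space $S_n$ are equivalent, the induced Finsler distance $d_v$ is bi-Lipschitz equivalent to the Riemannian distance $d_2$, and $(P_n,d_2)$ is complete as a Hadamard manifold. The step I expect to be the main obstacle is the Schur-multiplier estimate, that is, verifying that the kernel $s/\sinh s$ is positive definite and invoking the precise contraction statement valid for \emph{every} unitarily invariant norm (and not merely for the operator norm). Once that single inequality is established, the rest of the argument is a formal pullback of lengths through the matrix logarithm.
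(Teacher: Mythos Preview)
The paper does not give its own proof of this theorem: it cites Friedland--Freitas \cite{FriedlandFreitas2004} and only adds the remark that their argument, written for Schatten $p$-norms, goes through for an arbitrary unitarily invariant norm because the key inequality \cite[p.~7, (3.7)]{FriedlandFreitas2004} holds for any permutation-invariant convex function. Your write-up therefore supplies strictly more than the paper does, and the argument you outline is correct.

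Your route---the Daleckii--Krein derivative formula for $\log$, rewriting $d\log(X)[H]=M\circ(X^{-1/2}HX^{-1/2})$ with the correlation kernel $M_{ij}=s_{ij}/\sinh s_{ij}$, and then the Schur-multiplier contraction---is the standard modern proof of the exponential-metric-increasing property on $P_n$ (see e.g.\ Bhatia, \emph{Positive Definite Matrices}, Ch.~6). The inequality ``(3.7)'' that the paper alludes to is the same phenomenon phrased on the eigenvalue side: one proves $\lambda\bigl(d\log(X)[H]\bigr)\prec\lambda\bigl(X^{-1/2}HX^{-1/2}\bigr)$ and then invokes Schur-convexity of a permutation-invariant norm. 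So your proof and the cited one coincide in substance; yours makes explicit where positive-definiteness of $s\mapsto s/\sinh s$ enters.

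Regarding the step you flag as the potential obstacle: both parts are standard. Positive-definiteness of the kernel follows exactly as you say from the Weierstrass product $s/\sinh s=\prod_{k\ge 1}(1+s^2/(k\pi)^2)^{-1}$, each factor being the Fourier transform of a two-sided exponential and hence a positive-definite function on $\RR$; Schur products preserve this. For the contraction $\|M\circ\tilde H\|_v\le\|\tilde H\|_v$ with $M$ a correlation matrix, write $M=\sum_k w_kw_k^{\dagger}$ so that $M\circ\tilde H=\sum_k D_k\tilde H D_k^{\dagger}$ with $D_k=\diag(w_k)$ and $\sum_k D_k^{\dagger}D_k=\sum_k D_kD_k^{\dagger}=I$. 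This map is unital, positive, and trace-preserving, hence $\lambda(M\circ\tilde H)\prec\lambda(\tilde H)$ for Hermitian $\tilde H$; since any permutation-invariant norm $v$ is convex and permutation-invariant, it is Schur-convex, giving $v(\lambda(M\circ\tilde H))\le v(\lambda(\tilde H))$. This is precisely the ``holds for a permutation-invariant convex function'' remark in the paper, so no extra hypothesis (such as sign-invariance of $v$) is needed. The completeness and geodesic claims follow exactly as you say.
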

In \cite{FriedlandFreitas2004}, this theorem is stated for the Schatten $p$-norm,
although its proof works for any unitarily invariant norm. 
%

\begin{Cor}\label{cor:convexity}
Any $d_2$-geodesic in $P_n$ is also $d_v$-geodesic.
Any $d_2$-convex function $f:P_n\to \RR$ is also $d_v$-convex.
\end{Cor}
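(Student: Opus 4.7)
My strategy is to leverage the explicit formula for $d_v$ from Theorem~\ref{thm:Finsler} together with the explicit description of $d_2$-geodesics. The first step is to recall that a $d_2$-geodesic $\gamma:[0,1]\to P_n$ with endpoints $X,Y$ admits the closed form $\gamma(t) = X^{1/2} e^{tM} X^{1/2}$ for $M := \log(X^{-1/2}YX^{-1/2})$; this can be read off directly from the formula for $\gamma_{X,H}$ given before Theorem~\ref{thm:Finsler}, with the substitution $X^{-1/2}HX^{-1/2} = -M$.

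Next, to show that $\gamma$ is also a $d_v$-geodesic, I would verify the defining metric identity $d_v(\gamma(s),\gamma(t)) = (t-s)\, d_v(X,Y)$ for $0 \leq s \leq t \leq 1$. The key simplification is that, after using $GL_n$-invariance of $d_v$ to push everything forward by $X^{-1/2}$, one is reduced to computing $d_v(e^{sM}, e^{tM})$ along a one-parameter family of commuting positive definite matrices. The quantity $(e^{sM})^{-1/2} e^{tM} (e^{sM})^{-1/2}$ collapses to $e^{(t-s)M}$, so Theorem~\ref{thm:Finsler} yields $d_v(\gamma(s),\gamma(t)) = \|(t-s)M\|_v = (t-s)\|M\|_v$, which equals $(t-s)\, d_v(X,Y)$ by one more application of the Friedland--Freitas formula (with $s=0, t=1$).

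Given the first part, the second assertion on convexity is essentially automatic. Since the definition of $d_2$-convexity in the excerpt requires only the existence of \emph{some} $d_2$-geodesic along which the convexity inequality holds, and that same geodesic is now certified to be a $d_v$-geodesic, the inequality $f(\gamma(t)) \leq (1-t)f(X) + tf(Y)$ is precisely what is needed to conclude $d_v$-convexity.

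I do not anticipate a genuine obstacle here: the entire argument is a short calculation once Theorem~\ref{thm:Finsler} and the explicit form of $d_2$-geodesics are available. The one mildly delicate point is the passage through the commuting one-parameter family $\{e^{tM}\}_{t\in\RR}$; it is exactly this commutativity that ensures the spectral formula for $d_v$ telescopes cleanly and uniformly across every choice of invariant norm $\|\cdot\|_v$, so no separate argument per norm is needed.
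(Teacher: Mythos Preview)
Your proof is correct and is exactly the intended deduction: the paper states this result as an immediate corollary of Theorem~\ref{thm:Finsler} and gives no separate proof, so your explicit verification via the closed form $\gamma(t)=X^{1/2}e^{tM}X^{1/2}$, $GL_n$-invariance, and the computation $(e^{sM})^{-1/2}e^{tM}(e^{sM})^{-1/2}=e^{(t-s)M}$ is precisely what the reader is expected to supply. The second assertion then follows, as you say, because the $d_2$-geodesic witnessing the convexity inequality is itself a $d_v$-geodesic.
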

The metric derivatives and slopes in $(P_n,d_v)$ 
are given as follows.
\begin{Lem}\label{lem:correspondence}
\begin{itemize}
\item[(1)] For an absolutely continuous curve $\gamma:(a,b) \to P_n$, it holds
\[
|\gamma'|(t) = \| \dot \gamma (t)\|_{v,\gamma(t)} \quad ({\rm a.e.}\ t \in (a,b)).
\] 
\item[(2)] For a differentiable $d_v$-convex function $f:P_n \to \RR$, it holds 
\[
|\partial f|(X) = \| df(X)\|^*_{v,X} \quad (X \in P_n).
\]
\end{itemize}
\end{Lem}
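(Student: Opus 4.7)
The plan is to handle each part separately, in both cases reducing to the explicit distance formula of Theorem~\ref{thm:Finsler}.

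For (1), the easy direction $|\gamma'|(t) \le \|\dot\gamma(t)\|_{v,\gamma(t)}$ follows immediately from the length--distance inequality $d_v(\gamma(s),\gamma(t)) \le \int_s^t \|\dot\gamma(r)\|_{v,\gamma(r)}\,dr$ (which holds because $d_v$ is defined as an infimum of Finsler lengths) combined with the Lebesgue differentiation theorem. For the reverse direction I would use Theorem~\ref{thm:Finsler} to rewrite
\[
d_v(\gamma(s),\gamma(t)) = \|\log(\gamma(t)^{-1/2}\gamma(s)\gamma(t)^{-1/2})\|_v
\]
and then Taylor-expand near $s = t$, at one of the a.e.\ points where $\gamma$, viewed as a curve in $S_n$, is differentiable. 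Writing $\gamma(s) = \gamma(t) + (s-t)\dot\gamma(t) + o(s-t)$ and conjugating by $\gamma(t)^{-1/2}$ gives the argument of the log as $I + (s-t)\gamma(t)^{-1/2}\dot\gamma(t)\gamma(t)^{-1/2} + o(s-t)$; smoothness of the matrix logarithm at $I$ removes the $I$ and preserves the linear term, so dividing by $|s-t|$ and letting $s \to t$ yields exactly $\|\dot\gamma(t)\|_{v,\gamma(t)}$.

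For (2), for the upper bound $|\partial f|(X) \le \|df(X)\|^*_{v,X}$, given $Y \ne X$ take the $d_v$-geodesic $\gamma: [0,1] \to P_n$ from $X$ to $Y$ provided by Theorem~\ref{thm:Finsler}; by $GL_n$-invariance of the Finsler length, $\|\dot\gamma(0)\|_{v,X} = d_v(X,Y)$. The $d_v$-convexity inequality $f(\gamma(t)) \le (1-t)f(X) + tf(Y)$, divided by $t > 0$ and sent to $t \to 0^+$, gives $df(X)(\dot\gamma(0)) \le f(Y) - f(X)$, and dual-norm duality then yields
\[
f(X) - f(Y) \le -df(X)(\dot\gamma(0)) \le \|df(X)\|^*_{v,X}\, d_v(X,Y),
\]
so taking the sup over $Y$ is the claim. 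For the matching lower bound, pick $H \in T_X$ with $\|H\|_{v,X} = 1$ and $-df(X)(H) \ge \|df(X)\|^*_{v,X} - \epsilon$ (possible by dual-norm duality, replacing $H$ by $-H$ if needed; the case $df(X) = 0$ is trivial). Set $Y_t := \gamma_{X,H}(t)$, so $d_v(X, Y_t) = t$. Convexity of $t \mapsto f(Y_t)$ (from $d_v$-convexity of $f$ along the $d_v$-geodesic $Y_t$, cf.\ Corollary~\ref{cor:convexity}) makes $(f(X) - f(Y_t))/t$ non-increasing in $t > 0$ with right limit $-df(X)(H)$ as $t \to 0^+$; hence $\sup_{t > 0} (f(X) - f(Y_t))/t = -df(X)(H) \ge \|df(X)\|^*_{v,X} - \epsilon$. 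Since this sup lower-bounds $|\partial f|(X)$, letting $\epsilon \to 0$ finishes the proof.

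Neither part poses a deep obstacle once Theorem~\ref{thm:Finsler} is in hand. The one genuinely delicate point, I expect, is controlling the $o(s-t)$ remainder in the Taylor expansion of (1) inside the norm $\|\cdot\|_v$; but since $S_n$ is finite-dimensional and all norms on it are equivalent, this reduces to the standard smoothness of matrix inversion and of the matrix logarithm near $I$. Everything else is a direct packaging of Theorem~\ref{thm:Finsler}, $d_v$-convexity, and the duality of the Finsler tangent and cotangent norms.
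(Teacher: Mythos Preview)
Your proposal is correct and follows essentially the same approach as the paper. For (1) the paper normalizes to $\gamma(0)=I$ and writes $\gamma(t)=e^{H(t)}$ before computing the limit via Theorem~\ref{thm:Finsler}, which is exactly your Taylor-expansion/log argument with the $\gamma(t)^{-1/2}$-conjugation absorbed into the normalization (your separate ``easy direction'' is then redundant but harmless); for (2) the paper writes the same convexity-plus-dual-norm argument as a single chain of equalities rather than two inequalities.
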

\begin{proof}
(1). We can assume that 
$\gamma(0) = I$ and 
$\gamma$ is differentiable at $0 \in (a,b)$.
Also we can assume that $\gamma(t) = e^{H(t)}$
for absolutely continuous curve $t \mapsto H(t) \in S_n$ such that $H(0) = 0$  and it is differentiable at $t=0$.
From $\gamma(t) = I + H(t) + \frac{1}{2!}H(t)^2 +\cdots$ and $H(0) = 0$, we have $\dot \gamma(0) = \dot H(0)$.
Then, by the definition of the metric derivative, it holds 
\[
|\gamma'|(0) = \lim_{s \to 0} \frac{d_v(\gamma(s),I)}{|s|}=\lim_{s \to 0} \frac{\| H(s) \|_v}{|s|} 
= \lim_{s \to 0} \left\| \frac{H(s) - H(0)}{s} \right\|_v = \| \dot H(0) \|_v = \|\dot \gamma(0)\|_{v,I}.
\]

(2). By the definition (\ref{eqn:slope}) of the slope and the bijectivity of the exponential map, 
it holds
\begin{eqnarray*}
|\partial f|(X) 
&=& 
\sup_{H \in S_n: \|H\|_{v,H} =1} \lim_{t \to 0} \frac{\max \{0, f(X)- f(\gamma_{X,H}(t))\}}{t}\\
&=& \sup_{H \in S_n: \|H\|_{v,H}=1} \max\{0, df(X)(H)\} = \|df(X)\|_{v,X}^*, 
\end{eqnarray*}
where we use $d_v$-convexity of $f$ for the second equality and $df(X)(-H) = - df(X)(H)$ for the last equality.
\end{proof}

\section{Duality of minimum gradient-norm}~\label{sec:duality}
For a $d_2$-convex function $f:P_n \to \RR$, 
the {\em recession function (asymptotic slope)}~\cite{Hirai_Hadamard2022,KLM2009JDG} $f^{\infty}_X:T_X \to \RR \cup \{\infty\}$ is defined by
\begin{equation}
f^{\infty}_X(H) := \lim_{t \to \infty} \frac{f(\gamma_{X,H}(t))- f(X)}{t} \quad (H \in T_X).
\end{equation}
By convexity of $f$, the function in the limit is monotone nondecreasing, 
and the limit exists in $\RR \cup \{\infty\}$.
Also it is positively homogeneous: $f^{\infty}_X(aH) = a f^{\infty}_X(H)$ for $a\geq 0$.
Therefore, $f^{\infty}_X$ is determined by the values on 
the unit sphere $S_X := \{ H \in T_X \mid \|H\|_{2,X} = 1\}$.
It is known~\cite[Lemma 2.10]{KleinerLeeb2006} that for $H \in S_X$, $H' \in S_{X'}$ 
it holds $f^{\infty}_X(H) = f^{\infty}_{X'}(H')$ 
if the corresponding unit-speed $d_2$-geodesic rays $t \mapsto \gamma_{X,H}(t)$  
and $t \mapsto \gamma_{X',H'}(t)$ are {\em asymptotic}
that is,
$\sup_{t \geq 0} d_2(\gamma_{X,H}(t), \gamma_{X',H'}(t)) < \infty$.
See also \cite[Section 2.3]{HiraiSakabe2024FOCS}.
The asymptotic relation is an equivalence relation $\sim$ on the set of all unit-speed $d_2$-geodesic rays.
It is naturally extended on arbitrary $d_2$-geodesic rays  
by: $\gamma \sim \gamma'$ if the speeds of $\gamma$ and $\gamma'$ are the same and 
their corresponding unit-speed geodesics are asymptotic.
The resulting space of equivalence classes of $d_2$-geodesic rays 
are known as (the Euclidean cone of) the {\em boundary of $M$ at infinity}.
This space is identified with $T_X$ for arbitrary $X\in P_n$ 
since all geodesics issuing at $X$ are the representatives of this space.
See \cite[Part II, Chapters 8--10]{BridsonHaefliger1999} for the boundary (of arbitrary CAT(0) spaces).

We choose the tangent space $T_I = S_n$ at the identity matrix $I \in P_n$ 
as a specified set of the representatives.
We may denote $f^{\infty}_I$ simply by $f^{\infty}$ 
and regard it as $S_{n} \to \RR \cup \{\infty\}$.
For a geodesic ray $t \mapsto \gamma_{X,H}(t)$ 
issuing at $X$, the unique geodesic issuing at $I$ asymptotic to $\gamma_{X,H}$ 
is given by $t \mapsto \gamma_{I, uX^{-1/2}HX^{-1/2}u^\dagger}(t)$ for some unitary matrix $u$.\footnote{This can be seen by adapting the following:
For $\lambda \in \RR^n$ with $\lambda_1 \geq \lambda_2 \geq \cdots \geq \lambda_n$, it is not difficult to see 
that $t \to e^{t \diag \lambda}$ and $t \to b e^{t \diag \lambda} b^{\dagger}$ are asymptotic if $b \in GL_n$ is upper-triangular. 
For $g \in GL_n$, consider QR-factorization 
$g = u b$ for unitary $u$ and upper-triangular $b$. 
Then the geodesic ray $t \mapsto ge^{t\diag \lambda}g^{\dagger} = u b e^{t \diag \lambda}b^{\dagger}u^{\dagger}$ is asymptotic to $t \to ue^{t \diag \lambda}u^{\dagger} = e^{t u \diag \lambda u^{\dagger}}$.
}
By the unitary invariance of the norm, it holds $\|H\|_{v,X} = \|X^{-1/2}HX^{-1/2}\|_v = \|uX^{-1/2}HX^{-1/2}u^\dagger\|_v$.
Hence we have
\begin{equation}\label{eqn:f_X^infty=f^infty}
\inf_{H \in T_X: \|H\|_{v,X}\leq 1} f_X^{\infty}(H) = \inf_{H \in S_n:\|H\|_v \leq 1} f^{\infty}(H). 
\end{equation}
Now, we formulate the main result of this section.
\begin{Thm}\label{thm:duality} Let $f: P_n \to \RR$ be a differentiable $d_2$-convex function. 
Then it holds
\begin{equation}
\inf_{X \in P_n} \|df(X)\|^*_{v,X}  = \sup_{H \in S_n:\|H\|_v \leq 1}  - f^{\infty}(H).
\end{equation}
Suppose that  $\kappa^* := \inf_{X \in P_n} \|df(X)\|^*_{v,X} > 0$.
There is a curve of maximal slope $X:(0,\infty) \to P_n$ of $f$ such that
\begin{itemize}
\item[(1)] $\displaystyle \lim_{t \to \infty} \|df(X(t))\|^*_{v,X(t)} = \kappa^*$, and  
\item[(2)]  $f^{\infty}(H^*) = - \kappa^*$ for any accumulating point $H^*$ of 
$H(t) := (\log X(t))/d_v(I,X(t))$.
\end{itemize} 
\end{Thm}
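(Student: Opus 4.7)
The plan is to prove weak duality directly from convexity of $f$, then realize both sides simultaneously via a curve of maximal slope on $(P_n, d_v)$, following the Hirai--Sakabe blueprint adapted to the Finsler setting.

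\emph{Weak duality.} Fix $X \in P_n$ and $H \in S_n$ with $\|H\|_v \le 1$. The discussion preceding the theorem associates to $H$ a tangent vector $H' \in T_X$ with $\|H'\|_{v,X} = \|H\|_v$ and $f^\infty_X(H') = f^\infty(H)$ (via the unique $d_2$-geodesic ray from $X$ asymptotic to $\gamma_{I,H}$). Monotonicity of $\tau \mapsto (f(\gamma_{X,H'}(\tau)) - f(X))/\tau$ from $d_2$-convexity of $f$ yields $f^\infty_X(H') \ge df(X)(H')$, and the pairing bound gives $df(X)(H') \ge -\|df(X)\|^*_{v,X}\|H'\|_{v,X} \ge -\|df(X)\|^*_{v,X}$. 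Taking $\sup_H$ then $\inf_X$ proves $\sup_{\|H\|_v\le 1}(-f^\infty(H)) \le \inf_X \|df(X)\|^*_{v,X}$. When $\kappa^* = 0$, both sides equal zero (using $H=0$, since $f^\infty(0)=0$), so hereafter assume $\kappa^* > 0$.

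\emph{Gradient flow and energy estimate.} Theorem~\ref{thm:GMM} applies on $(P_n, d_v)$: $f$ is $d_v$-convex by Corollary~\ref{cor:convexity}, and bounded subsets of the proper finite-dimensional Finsler manifold $(P_n,d_v)$ are relatively compact. For the resulting curve of maximal slope $X\colon (0,\infty) \to P_n$, Lemma~\ref{lem:correspondence} gives $|X'|(t) = |\partial f|(X(t)) = \|df(X(t))\|^*_{v,X(t)} =: \kappa_t$, with $\kappa_t$ nonincreasing and $\lim_t \kappa_t =: \kappa \ge \kappa^*$. The energy identity~(\ref{eqn:energy}) together with the triangle inequality yield
\[
f(X(t)) - f(I) = -\int_0^t \kappa_s^2\,ds + O(1), \qquad d_v(I, X(t)) \le \int_0^t \kappa_s\,ds + O(1),
\]
so $f(X(t)) \to -\infty$ and $d_v(I, X(t)) \to \infty$. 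Ces\`aro averaging of $\kappa_s \searrow \kappa$ gives $\int_0^t \kappa_s^2 ds/t \to \kappa^2$ and $\int_0^t \kappa_s ds/t \to \kappa$, so $\limsup_{t\to\infty}(f(X(t))-f(I))/d_v(I,X(t)) \le -\kappa$.

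\emph{Asymptotic direction and strong duality.} Normalize $H(t) := \log X(t)/d_v(I, X(t))$, which satisfies $\|H(t)\|_v = 1$ and so has an accumulation point $H^*$ on the compact $v$-unit sphere along some subsequence $H(t_k) \to H^*$; set $\beta_k := d_v(I, X(t_k))$. The curve $u \mapsto e^{u H(t_k)}$ is a unit-speed $d_v$-geodesic (by Theorem~\ref{thm:Finsler}) through $I$ and $X(t_k) = e^{\beta_k H(t_k)}$, so $d_v$-convexity of $f$ and monotonicity of the difference quotient give, for any fixed $s > 0$ and $k$ large,
\[
\frac{f(X(t_k)) - f(I)}{\beta_k} \ \ge \ \frac{f(e^{s H(t_k)}) - f(I)}{s}.
\]
Sending $k \to \infty$ by continuity of $f$ and the exponential map, then $s \to \infty$ by definition of $f^\infty$, the right side tends to $f^\infty(H^*)$, giving $\liminf_k (f(X(t_k))-f(I))/\beta_k \ge f^\infty(H^*)$. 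Combined with the limsup bound from the previous paragraph, $f^\infty(H^*) \le -\kappa$; weak duality provides $f^\infty(H^*) \ge -\kappa^*$ and the flow gives $\kappa \ge \kappa^*$, so the chain $-\kappa^* \ge -\kappa \ge f^\infty(H^*) \ge -\kappa^*$ collapses to $\kappa = \kappa^*$ and $f^\infty(H^*) = -\kappa^*$, simultaneously establishing the duality identity and both assertions (1) and (2) for every accumulation point $H^*$.

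The hardest step is aligning the primal dissipation rate $\kappa_t^2$ along the wandering flow with a specific dual recession direction, since $f^\infty$ need not be upper-semicontinuous on the Finsler unit sphere. Ces\`aro averaging resolves the first issue by extracting a common asymptotic rate $\kappa$ from the merely nonincreasing $\kappa_t$ (so one never needs a pointwise lower bound on $d_v(I,X(t))$), and the iterated-limit argument with the monotone difference quotient resolves the second, bypassing any regularity issue of $f^\infty$ by exploiting convexity rather than semicontinuity. It is this interplay that pins down an individual $H^*$ realizing strong duality rather than merely bounding the supremum.
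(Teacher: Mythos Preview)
Your proof is correct and follows the same route as the paper: weak duality from convexity, a curve of maximal slope on $(P_n,d_v)$ via Theorem~\ref{thm:GMM}, the energy identity plus triangle inequality to control $(f(X(t))-f(I))/d_v(I,X(t))$, extraction of an accumulation point $H^*$ on the unit $v$-sphere, and the monotone difference-quotient iterated limit to obtain $f^\infty(H^*)\le -\kappa$, closed by weak duality. The only variation is that the paper bounds $\int_0^t\kappa_s^2\,ds\big/\int_0^t\kappa_s\,ds \ge \kappa$ for all $t$ via the Cauchy--Schwarz inequality $(\int_0^t\kappa_s\,ds)^2 \le t\int_0^t\kappa_s^2\,ds$ together with $\kappa_s\ge\kappa$, whereas you reach the same bound only in the $\limsup$ via Ces\`aro averaging; this is a cosmetic difference (and your extraneous $O(1)$ terms are unnecessary, since the energy identity and triangle inequality hold exactly from $t=0^+$).
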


The proof goes analogously as in \cite[Theorem 3.1]{HiraiSakabe2024FOCS} and in Section~\ref{subsec:outline}.
\begin{Lem}[Weak duality]\label{lem:weakduality}
\begin{equation}
\inf_{X \in P_n} \|df(X)\|^*_{v,X}  \geq \sup_{H \in S_n:\|H\|_v \leq 1}  - f^{\infty}(H).
\end{equation}
\end{Lem}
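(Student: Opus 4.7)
The plan is to reduce the asserted global inequality to a pointwise bound at each $X \in P_n$ and then to deduce that bound from the convexity of $f$ along a single geodesic ray.

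First, I would appeal to the identity
\[
\inf_{H \in T_X:\|H\|_{v,X}\leq 1} f^\infty_X(H) \;=\; \inf_{H \in S_n:\|H\|_v \leq 1} f^\infty(H),
\]
which the excerpt records just before Theorem~\ref{thm:duality}; it follows from the constancy of $f^\infty$ on asymptotic classes together with the unitary (hence $GL_n$) invariance of $\|\cdot\|_v$. Negating both sides turns the right-hand side of the lemma into $\sup_{H \in T_X:\|H\|_{v,X}\leq 1}(-f^\infty_X(H))$, so it is enough to prove the local bound
\[
\|df(X)\|^*_{v,X} \;\geq\; \sup_{H \in T_X:\|H\|_{v,X}\leq 1}\bigl(-f^\infty_X(H)\bigr) \qquad (X \in P_n)
\]
and then take the infimum over $X$.

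For the local bound, I would fix an admissible $H \in T_X$ (assuming $f^\infty_X(H)<\infty$, the other case being trivial) and examine the scalar function $\varphi(t):=f(\gamma_{X,H}(t))$ on $[0,\infty)$. Since $f$ is $d_2$-convex and $\gamma_{X,H}$ is a $d_2$-geodesic, $\varphi$ is a convex function of one real variable, so its right derivative at $0$ is bounded above by every forward difference quotient, and thus by its $t\to\infty$ limit:
\[
\varphi'_+(0) \;\leq\; \lim_{t\to\infty}\frac{\varphi(t)-\varphi(0)}{t} \;=\; f^\infty_X(H).
\]
The chain rule together with the explicit formula $\gamma_{X,H}(t)=X^{1/2}e^{-tX^{-1/2}HX^{-1/2}}X^{1/2}$ gives $\dot\gamma_{X,H}(0)=-H$, hence $\varphi'_+(0)=-df(X)(H)$, so $df(X)(H)\geq -f^\infty_X(H)$. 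Because the unit ball of $\|\cdot\|_{v,X}$ is symmetric, the dual norm coincides with the support function of that ball applied to the linear form $df(X)$, whence
\[
\|df(X)\|^*_{v,X} \;=\; \sup_{\|W\|_{v,X}\leq 1} df(X)(W) \;\geq\; df(X)(H) \;\geq\; -f^\infty_X(H).
\]
Taking the supremum over admissible $H$ yields the local bound, and then taking $\inf_X$ combined with the displayed identity yields the lemma.

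I do not expect any serious obstacle for this direction: the content is exhausted by one-variable convexity of $\varphi$ and by the invariance identity already recorded in the excerpt. The only bookkeeping subtlety is the sign $\dot\gamma_{X,H}(0)=-H$ imposed by the paper's geodesic convention, which is purely cosmetic thanks to the symmetry of the unit ball. The genuinely hard direction—producing an asymptotic direction that attains the supremum, i.e., the matching inequality and the realization through a gradient-flow curve in Theorem~\ref{thm:duality}—is the part that will require the unbounded gradient-flow machinery developed later in Section~\ref{sec:duality}.
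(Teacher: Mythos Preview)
Your proposal is correct and follows essentially the same route as the paper: fix $X$, use one-variable convexity of $t\mapsto f(\gamma_{X,H}(t))$ to bound the directional derivative by the recession slope, pass to the dual norm, and then invoke the base-point independence identity before taking $\inf_X$. The only difference is cosmetic: the paper writes the chain as $f_X^\infty(H)\geq df(X)(H)\geq -\|df(X)\|_{v,X}^*$ directly (tacitly using $\dot\gamma_{X,H}(0)=H$ as stated in the text), whereas you take the displayed geodesic formula literally, pick up the sign $\dot\gamma_{X,H}(0)=-H$, and absorb it via the symmetry of the unit ball---which is a perfectly valid way to handle that typographical inconsistency.
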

\begin{proof}
From convexity of $f$, it holds
\begin{eqnarray*}
f_X^{\infty}(H) &= & \lim_{t \to \infty} \frac{f(\gamma_{X,H}(t)) - f(X)}{t} \geq  \lim_{t \to 0} \frac{f(\gamma_{X,H}(t)) - f(X)}{t} = df(X)(H) \\ 
&\geq & - \|df(X)\|_{v,X}^*\|H\|_{v,X} \geq  - \|df(X)\|_{v,X}^*,
\end{eqnarray*}
where we used $\|H\|_v \leq 1$ for the last inequality.
Therefore, by (\ref{eqn:f_X^infty=f^infty}), we have 
\[
\| df(X)\|_{v,X}^* \geq \sup_{H \in T_X:\|H\|_{v,X}\leq 1 } -f_X^{\infty}(H) = \sup_{H \in S_n:\|H\|_v\leq 1 } -f^{\infty}(H)
\]
for any $X \in P_n$.
\end{proof}

\begin{proof}[Proof of Theorem~\ref{thm:duality}]
We may consider the case of $\kappa^* > 0$.
By Theorem~\ref{thm:Finsler} and Corollary~\ref{cor:convexity}, 
Theorem~\ref{thm:GMM} is applicable to the $d_v$-convex function $f$ 
on complete metric space $(P_n,d_v)$ in which every bounded closed set is compact.
Consider a curve $X:(0,\infty) \to P_n$ of a maximal slope of $f$ 
with $\lim_{t \to +0}x(t) = I$ in Theorem~\ref{thm:GMM}.
Recall Lemma~\ref{lem:correspondence} for the correspondence $ |\partial f|(X) = \|df(X)\|^*_{v,X}$ and $|X'|(t) = \|\dot X(t)\|_{v,X(t)}$.
Let $\kappa := \lim_{t \to \infty} |\partial f|(X(t))$; the limit exists by Theorem~\ref{thm:GMM}~(2).
It is obvious that $\kappa \geq \kappa^* (> 0)$. By (\ref{eqn:equal}), (\ref{eqn:energy}), and the triangle inequality, we have
\begin{eqnarray}
f(X(t)) - f(I) &=& - \int_{0}^t |\partial f|^2(X(s)) ds \leq - \kappa^2 t,  \label{eqn:1} \\
d(X(t),I) & \leq & \int_{0}^t |X'|(s) ds = \int_{0}^t |\partial f|(X(s)) ds. \label{eqn:2}
\end{eqnarray}
It necessarily holds $d(X(t),I) \to \infty$. 
Otherwise, by (\ref{eqn:1}), 
$(X(t))_{t \in \RR_+}$ has an accumulation point $X^*$ such that $f(X^*) = - \infty$, 
contradicting the continuity of $f$.

Choose $H(t) \in T_I = S_n$ such that $\|H(t)\|_v = 1$ and $X(t) = e^{d_v(X(t),I)H(t)}$. 
For all $r \in (0,d_v(X(t),I)]$ it holds
\begin{eqnarray*}
\frac{f(e^{r H(t)}) - f(I)}{r} & \leq & 
\frac{f(X(t)) - f(I)}{d_v (X(t),I)} \leq \frac{- \int_{0}^t |\partial f|^2(X(s))  ds}{ \int_{0}^t |\partial f|(X(s)) ds} \\ 
&\leq& - \frac{1}{t}\int_{0}^t |\partial f|(X(s)) ds  \leq  - \kappa,
\end{eqnarray*}
where the first inequality follows from $d_v$-convexity of $f$ along $d_v$-geodesic $\tau \mapsto e^{\tau H(t)}$, 
the second from (\ref{eqn:1}) and (\ref{eqn:2}), the third 
from Cauchy-Schwarz inequality $(\int f \cdot 1 ds)^2 \leq \int f^2 ds \int 1^2 ds$, and the last from Theorem~\ref{thm:GMM} (2).

Choose any subsequence of $(t_i)_{i=1,2,\ldots}$ such that $H(t_i) \to H^*$ and $d(X(t_i),I) \to \infty$. Then we have
\[
\frac{f(e^{r H^*}) - f(I)}{r} \leq - \kappa \quad (r \in (0,\infty)). 
\]
By $r \to \infty$, we have $f^{\infty}(H^*) \leq - \kappa$. Hence, by weak duality (Theorem~\ref{lem:weakduality}) we have 
\[ \inf_{H \in S_n: \|H\|_{v} \leq 1}f^{\infty}(H) \leq 
f^{\infty}(H^*) \leq - \kappa \leq - \kappa^* = \sup_{X \in P_n}- |\partial f|(X)\ \leq \inf_{H \in S_n:\|H\|_v \leq 1}f^{\infty}(H).  
\]
\end{proof}

\section{Proof of Theorem~\ref{thm:main}}\label{sec:proof}

We first show the weak duality in Theorem~\ref{thm:main}.
The following is a generalization of \cite[Lemma 2.2]{HayashiHiraiSakabe} in matrix scaling.
\begin{Lem}\label{lem:weakduality2}
For $g,h \in GL_n$ and $U \leqslant \CC^n$, it holds
\begin{equation}\label{eqn:weakduality}
\|T_{g^{\dagger}{\cal A}h}(I) - I \|_1 + \|T^*_{g^{\dagger}{\cal A}h}(I) - I \|_1 \geq 2(\dim U - \dim {\cal A}U).
\end{equation}
\end{Lem}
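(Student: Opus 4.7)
The plan is to reduce to the unscaled case $g=h=I$ and then extract the bound from the duality between the trace norm $\|\cdot\|_1$ and the spectral norm $\|\cdot\|_\infty$. For the reduction, set ${\cal B} := g^{\dagger}{\cal A}h$ and $V := h^{-1}U$; since $g,h \in GL_n$, we have $\dim V = \dim U$ and $\dim {\cal B}V = \dim g^{\dagger}{\cal A}U = \dim {\cal A}U$. Because $T_{g^\dagger {\cal A} h} = T_{\cal B}$ and $T^{*}_{g^\dagger {\cal A} h} = T^{*}_{\cal B}$, inequality (\ref{eqn:weakduality}) for $({\cal A},U,g,h)$ is equivalent to the same inequality for $({\cal B},V,I,I)$, so I may assume $g = h = I$ (relabeling ${\cal B}$ as ${\cal A}$ and $V$ as $U$).

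Next, let $P_U$ and $P_{{\cal A}U}$ denote the orthogonal projections onto $U$ and ${\cal A}U$, respectively, and recall the duality $\|X\|_1 = \sup\{\trace XM : M = M^{\dagger},\ \|M\|_\infty \leq 1\}$ valid for Hermitian $X$. I will apply it with the two test matrices
\[
M_1 := 2P_{{\cal A}U} - I, \qquad M_2 := I - 2P_U,
\]
each having spectrum in $\{\pm 1\}$, hence spectral norm $1$. The crucial feature of these choices is that when one expands $\trace(T_{\cal A}(I)-I)M_1 + \trace(T^{*}_{\cal A}(I)-I)M_2$, the scalar $\trace(T_{\cal A}(I)-I) = \trace(T^{*}_{\cal A}(I)-I) = \sum_k \trace A_kA_k^{\dagger} - n$ appears with opposite signs and cancels, leaving the bound
\[
\|T_{\cal A}(I)-I\|_1 + \|T^{*}_{\cal A}(I)-I\|_1 \geq 2(\dim U - \dim {\cal A}U) + 2\bigl( \trace P_{{\cal A}U} T_{\cal A}(I) - \trace T_{\cal A}(P_U) \bigr).
\]

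The remaining step is to verify that the bracketed slack is nonnegative. The structural observation is that each summand $A_kP_UA_k^{\dagger}$ of $T_{\cal A}(P_U)$ has range in $A_kU \subseteq {\cal A}U$, so $P_{{\cal A}U}T_{\cal A}(P_U) = T_{\cal A}(P_U)$. Combined with the cyclic identity $\trace T^{*}_{\cal A}(I)P_U = \trace T_{\cal A}(P_U)$, decomposing $I = P_U + P_U^{\perp}$ gives
\[
\trace P_{{\cal A}U}T_{\cal A}(I) - \trace T_{\cal A}(P_U) = \trace P_{{\cal A}U}T_{\cal A}(P_U^{\perp}) \geq 0,
\]
where nonnegativity uses that $P_{{\cal A}U}$ and $T_{\cal A}(P_U^{\perp})$ are both positive semidefinite.

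I expect the main (though mild) obstacle to be choosing the right pair $(M_1,M_2)$. The naive guesses $M = \pm P_U$ or $M = \pm P_{{\cal A}U}$ only recover the weaker bound $\dim U - \dim {\cal A}U$ on the right-hand side; obtaining the factor $2$ in (\ref{eqn:weakduality}) essentially forces one to use signed projections with full $\{\pm 1\}$ spectrum, which is exactly what makes the trace residuals cancel and simultaneously exposes the positive semidefinite slack $\trace P_{{\cal A}U}T_{\cal A}(P_U^{\perp})$.
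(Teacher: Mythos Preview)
Your proof is correct and takes a genuinely different route from the paper's.

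The paper's argument reduces to the commutative (matrix scaling) setting: it conjugates $g^{\dagger}{\cal A}h$ by unitaries aligned with $U$ and $({\cal A}U)^{\perp}$ so that each $B_k$ acquires an $(n-\ell)\times k$ zero block, forms the entrywise-squared nonnegative matrix $C_{ij} := \sum_k |(B_k)_{ij}|^{2}$, and then invokes the diagonal-contraction bound $\|H\|_1 \geq \|D(H)\|_1$ (Lemma~\ref{lem:norm}(3)) to replace the operator residuals by the vector residuals $C{\bf 1}-{\bf 1}$ and $C^{\top}{\bf 1}-{\bf 1}$; the remaining estimate is then the matrix-scaling weak duality of~\cite{HayashiHiraiSakabe}. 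By contrast, you stay entirely in the operator world: the two signed projections $M_1 = 2P_{{\cal A}U}-I$ and $M_2 = I-2P_U$ are dual witnesses for $\|\cdot\|_1$, and the key structural fact $P_{{\cal A}U}T_{\cal A}(P_U) = T_{\cal A}(P_U)$ plays the role of the zero-block observation, making the slack $\trace P_{{\cal A}U}T_{\cal A}(P_U^{\perp}) \geq 0$ manifest by positive semidefiniteness. Your argument is shorter and coordinate-free; the paper's argument, on the other hand, makes explicit the analogy with classical matrix scaling and reuses an existing lemma from that literature.
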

\begin{proof}
Since $\dim h^{-1} U = \dim U$ and $\dim g^{\dagger}{\cal A}U = \dim {\cal A}U$, 
by replacing $g^{\dagger}{\cal A}h$ with ${\cal A}$, 
we may assume that $g= h = I$. 
Let $k := \dim U$ and $\ell := \dim {\cal A}U$.
Let $u \in U_n$ be a unitary matrix such that 
the first $k$ columns of $u$ span $U$.
Let $v \in U_n$ be a unitary matrix such that 
the first $n-\ell$ columns of $v$ span the orthogonal complement of ${\cal A}U$
with respect to product $(x,y) \mapsto x^{\dagger}y$.

Let ${\cal B} := v^{\dagger} {\cal A} u$. 
Then, each matrix $B_k$ in ${\cal B}$ has an $(n-\ell) \times k$ zero submatrix in the upper-left corner.  
Also $\|T_{{\cal A}}(I) - I \|_1 =\|T_{{\cal B}}(I) - I \|_1$ and 
$\|T^*_{{\cal A}}(I) - I \|_1 =\|T^*_{{\cal B}}(I) - I \|_1$.
Define $n \times n$ nonnegative matrix $C = (C_{ij})$ by
\[
C_{ij} := \sum_{k=1}^m (B_k)_{ij}(B^{\dagger}_k)_{ij} = \sum_{k=1}^m |(B_k)_{ij}|^{2}.
\]
By using the diagonal projection $D$ in (\ref{eqn:diagonal}), we observe
\[
D(T_{\cal B}(I) - I) = C{\bf 1} - {\bf 1},\quad D(T^*_{\cal B}(I) - I) = C^{\top}{\bf 1} - {\bf 1}.  
\]
Therefore, by Lemma~\ref{lem:norm}~(3), we have 
\begin{equation}\label{eqn:above}
\|T_{\cal B}(I) - I \|_1+ \| T^*_{\cal B}(I) - I \|_1  \geq \| C{\bf 1} - {\bf 1}\|_1 + \|C^{\top}{\bf 1} - {\bf 1} \|_1. 
\end{equation}
We show that RHS in (\ref{eqn:above}) $\geq$ RHS in (\ref{eqn:weakduality}). 
Now $C$ has an $(n- \ell) \times k$ zero submatrix of $C$ in the left-upper corner.  
The rest is the same as in  \cite[Lemma 2.2]{HayashiHiraiSakabe}:
Let $p := C{\bf 1}$ and $q := C^{\top}{\bf 1}$.
Since $C_{ij} = 0$ for $i \in [n -\ell],j \in [k]$, 
it holds
\begin{equation}\label{eqn:second}
\sum_{i=1}^{n-\ell} p_i \leq \sum_{j=k+1}^{n} q_j,\quad \sum_{j=1}^{k} q_j \leq \sum_{i=n - \ell+ 1}^{n} p_i.
\end{equation}
Therefore we have
\begin{eqnarray*}
 &&\| C{\bf 1} - {\bf 1}\|_1 + \|C^{\top}{\bf 1} - {\bf 1} \|_1 = \sum_{i=1}^n |p_i - 1 | + \sum_{j=1}^n |q_j - 1 | \\
 && =  \sum_{i=1}^{n-\ell} |p_i - 1 | +\sum_{i=n-\ell+1}^n |p_i - 1 | + \sum_{j=1}^{k} |q_j - 1 | + \sum_{j=k+ 1}^{n} |q_j - 1 | \\
 && \geq (n-\ell) - \sum_{i=1}^{n-\ell} p_i + \sum_{i=n-\ell+1}^n p_i - \ell + k - \sum_{j=1}^{k} q_j +\sum_{j=k+ 1}^{n} q_j - (n-k) \\
 && \geq 2(k- \ell) = 2 (\dim U - \dim {\cal A}U),
\end{eqnarray*}
where the first inequality follows from the triangle inequality 
and the second from (\ref{eqn:second}).
\end{proof}

Suppose ${\cal A}\CC^n = \CC^n$, which is equivalent to $\rank (A_1\ A_2\ \cdots \ A_m) = n$. 
Then it holds $\det \sum_{k=1}^m A_k X A_k^{\dagger}  = \det (A_1X\ A_2X\ \cdots \ A_mX)(A_1\ A_2\ \cdots \ A_m)^{\dagger} > 0$ for each $X \in P_n$. 
Define $f: P_n \to \RR$ by
\begin{equation}
f(X) := \log \det \sum_{k=1}^m A_k X A_k^{\dagger} - \log \det X \quad (X \in P_n).
\end{equation} 

\begin{Lem}[{See e.g., \cite{BFGOWW_FOCS2019}}]
$f$ is $d_2$-convex.
\end{Lem}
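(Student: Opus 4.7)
My plan is to decompose $f = f_1 + f_2$ with $f_1(X) := \log \det T_{\cal A}(X)$ and $f_2(X) := -\log \det X$, and establish $d_2$-convexity of each summand separately. Along any $d_2$-geodesic $\gamma(t) = X_0^{1/2} e^{-t X_0^{-1/2} H_0 X_0^{-1/2}} X_0^{1/2}$, the determinant factors as $\det \gamma(t) = \det X_0 \cdot \exp(-t \trace(X_0^{-1} H_0))$, so $f_2(\gamma(t)) = -\log \det X_0 + t \trace(X_0^{-1} H_0)$ is affine in $t$. Hence $f_2$ is $d_2$-affine, in particular $d_2$-convex.

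For $f_1$, I would exploit the fact that $GL_n$ acts on $P_n$ by $d_2$-isometries in order to reduce to the base point $X = I$. Setting $K := X_0^{-1/2} H_0 X_0^{-1/2}$ and $\tilde A_k := A_k X_0^{1/2}$, one computes
\begin{equation*}
T_{\cal A}(\gamma(t)) = \sum_k \tilde A_k \, e^{-tK} \, \tilde A_k^{\dagger},
\end{equation*}
so it suffices to show that the map $t \mapsto \log \det \sum_k B_k \, e^{tH} \, B_k^{\dagger}$ is convex on $\RR$ for every tuple $(B_k)$ of $n \times n$ matrices and every Hermitian $H$.

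To handle the reduced statement, I would diagonalize $H = u \diag(\lambda) u^{\dagger}$, set $C_k := B_k u$, and rewrite
\begin{equation*}
\sum_k B_k \, e^{tH} \, B_k^{\dagger} = \sum_{j=1}^{n} e^{t \lambda_j} D_j, \qquad D_j := \sum_k c_{k,j} c_{k,j}^{\dagger} \succeq 0,
\end{equation*}
where $c_{k,j}$ denotes the $j$-th column of $C_k$. Decomposing each $D_j$ into rank-one summands $v_{j,l} v_{j,l}^{\dagger}$ and applying the Cauchy--Binet-type expansion for determinants of weighted Gram sums yields
\begin{equation*}
\det \sum_j e^{t \lambda_j} D_j = \sum_{S} c_S \exp(t \langle n(S), \lambda \rangle),
\end{equation*}
summed over the $n$-element index sets $S$ of the rank-one pieces, with coefficients $c_S \geq 0$ and multiplicity vectors $n(S) \in \ZZ_{\geq 0}^{n}$. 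Taking the logarithm expresses $t \mapsto \log \det \sum_k B_k \, e^{tH} \, B_k^{\dagger}$ as a log-sum-exp of affine functions of $t$, a classical convex function.

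The main obstacle is arranging the Cauchy--Binet expansion cleanly enough that the only dependence on $t$ sits inside exponentials with linear exponents and nonnegative coefficients; this is where all positivity of the completely positive operator $T_{\cal A}$ enters. Once the expression is in log-sum-exp form, convexity is immediate since the second derivative of a log-sum-exp equals a nonnegative variance. Adding the affine contribution of $f_2$ then yields $d_2$-convexity of $f$.
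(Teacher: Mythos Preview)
Your proposal is correct and follows essentially the same approach as the paper: both arguments parametrize a $d_2$-geodesic as $g e^{t\diag\lambda} g^{\dagger}$ (or equivalently via your $B_k e^{tH} B_k^{\dagger}$ after diagonalizing $H$), apply the Binet--Cauchy formula to write $\det T_{\cal A}(\gamma(t))$ as a nonnegative combination $\sum_j a_j e^{c_j^{\top}\lambda}$ of exponentials of affine functions, and conclude convexity from the log-sum-exp structure, with the $-\log\det X$ term contributing only an affine part. Your explicit separation $f = f_1 + f_2$ and the intermediate decomposition of $D_j$ into rank-one pieces are cosmetic differences; the substance is identical to the paper's sketch.
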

\begin{proof}[Sketch of proof]
By Binet-Cauchy formula, $f(ge^{\diag \lambda}g^{\dagger})$ is written as
\[
f(ge^{\diag \lambda}g^{\dagger}) = \log \sum_{j=1}^N a_j e^{c_j^{\top} \lambda} - {\bf 1}^{\top} \lambda - \det gg^{\dagger}
\]
for some $a_j > 0$, $c_j \in \ZZ^{n}$ $(j=1,2,\ldots,N)$. 
Then $\RR^n \ni \lambda \mapsto f(ge^{\diag \lambda}g^{\dagger})$ is convex, from which $d_2$-convexity of $f$ follows. 
\end{proof}
The differential $df$ of $f$ provides the residual of 
a scaling as follows.  
\begin{Lem}\label{lem:df}
Suppose that ${\cal A}\CC^n = \CC^n$.
For $X = hh^{\dagger} \in P_n$, it holds
\[
h^{\dagger} df(X) h = T^*_{g^{\dagger}{\cal A}h}(I) -I \quad (\in T_I^* = S_n),
\]
where $g \in GL_n$ satisfies $T_{g^{\dagger}{\cal A}h}(I) = I$.
\end{Lem}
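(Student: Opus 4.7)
The plan is a direct differential calculation followed by an unpacking of the scaling condition. First I would compute $df(X)$, identified as an element of $T_X^* \simeq S_n$ via the pairing $(F,H) \mapsto \trace FH$. For $H \in T_X = S_n$, the standard identity $d \log \det(Y)(K) = \trace(Y^{-1}K)$, the chain rule, and linearity of $T_{\cal A}$ give
\[
df(X)(H) = \trace\bigl(T_{\cal A}(X)^{-1}\, T_{\cal A}(H)\bigr) - \trace(X^{-1} H).
\]
Applying the adjoint relation $\trace\bigl(Y\, T_{\cal A}(H)\bigr) = \trace\bigl(T^*_{\cal A}(Y)\, H\bigr)$ to the first term, and then reading off the Hermitian representative, I would obtain
\[
df(X) \;=\; T^*_{\cal A}\bigl(T_{\cal A}(X)^{-1}\bigr) - X^{-1} \;\in\; S_n.
\]
The hypothesis ${\cal A}\CC^n = \CC^n$ ensures $T_{\cal A}(X) \succ 0$, so the inverse is well defined, and the output is manifestly Hermitian since $T^*_{\cal A}$ preserves the Hermitian cone.

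Next I would unpack the scaling. A straightforward computation yields
\[
T_{g^{\dagger}{\cal A}h}(I) = g^{\dagger} T_{\cal A}(hh^{\dagger}) g = g^{\dagger} T_{\cal A}(X) g, \qquad T^*_{g^{\dagger}{\cal A}h}(I) = h^{\dagger} T^*_{\cal A}(gg^{\dagger}) h.
\]
The assumption $T_{g^{\dagger}{\cal A}h}(I) = I$ therefore forces $gg^{\dagger} = T_{\cal A}(X)^{-1}$, whence
\[
T^*_{g^{\dagger}{\cal A}h}(I) \;=\; h^{\dagger} T^*_{\cal A}\bigl(T_{\cal A}(X)^{-1}\bigr) h.
\]
Combining this with the trivial identity $I = h^{\dagger}(hh^{\dagger})^{-1} h = h^{\dagger} X^{-1} h$ gives
\[
T^*_{g^{\dagger}{\cal A}h}(I) - I \;=\; h^{\dagger}\Bigl[T^*_{\cal A}\bigl(T_{\cal A}(X)^{-1}\bigr) - X^{-1}\Bigr] h \;=\; h^{\dagger}\, df(X)\, h,
\]
which is the claimed formula.

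There is no real obstacle; the only points requiring care are (i) keeping the two identifications of $T_X$ and $T_X^*$ with $S_n$ consistent via the trace pairing, so that $df(X)$ correctly corresponds to the symmetric matrix $T^*_{\cal A}(T_{\cal A}(X)^{-1}) - X^{-1}$, and (ii) verifying that such a $g$ actually exists—this is immediate from $T_{\cal A}(X) \succ 0$, since any Hermitian positive-definite matrix admits a decomposition of the form $(gg^{\dagger})^{-1}$, e.g., via a Cholesky factor of $T_{\cal A}(X)^{-1}$.
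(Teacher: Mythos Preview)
Your proof is correct and follows essentially the same route as the paper's: compute $df(X)$ via $d\log\det(Y)(K)=\trace(Y^{-1}K)$, use the scaling condition $T_{g^{\dagger}{\cal A}h}(I)=I$ to identify $gg^{\dagger}=T_{\cal A}(X)^{-1}$, and conjugate by $h$. The only difference is notational---you work with the operators $T_{\cal A}, T^*_{\cal A}$ and the adjoint relation, whereas the paper writes out the sums $\sum_k A_k(\cdot)A_k^{\dagger}$ explicitly---and your added remarks on well-definedness (positivity of $T_{\cal A}(X)$, existence of $g$) are welcome but not required.
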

\begin{proof}
From $T_{g^{\dagger}{\cal A}h}(I) = I$, it holds $(gg^{\dagger})^{-1} =  \sum_k A_kXA_k^{\dagger} =:B$. 
\begin{eqnarray*}
df(X)(H) &= &\frac{d}{dt} \mid_{t = 0} \log \det \sum_k A_k (X+tH) A_k^{\dagger} - \log \det (X+tH) \\
&=& \trace B^{-1}\sum_k A_k HA_k^{\dagger} - \trace X^{-1}H = \trace \sum_k A_k^{\dagger}B^{-1} A_k H - \trace X^{-1}H\\
&=&  \trace \left( \sum_{k}  A_k^{\dagger} gg^{\dagger} A_k  - X^{-1} \right)H.
\end{eqnarray*}
Hence we have $df(X) = \sum_{k}  A_k^{\dagger} gg^{\dagger} A_k - X^{-1}$, 
and $h^{\dagger}df(X)h = \sum_{k}  h^{\dagger}A_k^{\dagger} gg^{\dagger} A_kh - I$.
\end{proof}

We next compute the recession function $f^{\infty}$ of $f$. 
The following is a variant in \cite[Proposition 3.5 (1)]{Hirai_Hadamard2022}.
\begin{Lem}\label{lem:f^infty}
Suppose that ${\cal A}\CC^n = \CC^n$.
For $H \in S_n$, suppose that $H = u \diag \lambda u^{\dagger}$ for $u \in U_n$ and 
$\lambda \in \RR^n$ with $\lambda_1 \geq \lambda_2 \geq \cdots \geq \lambda_n$. Let $U_i$ denote the vector subspace spanned by the first $i$ columns of $u$. Then it holds
\begin{equation}
f^{\infty}(H) = \sum_{i=1}^n (\lambda_i - \lambda_{i+1}) (  \dim {\cal A}U_i - \dim U_i), 
\end{equation}
where $\lambda_{n+1} := 0$. 
\end{Lem}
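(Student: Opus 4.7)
The plan is to compute $f(\gamma_{I,H}(t))$ asymptotically via a Cauchy--Binet expansion of the capacity and to recognize the leading exponent as the value of a polymatroid linear program. Writing $H = u\diag\lambda u^{\dagger}$ and $B_k := A_ku$, the geodesic $\gamma_{I,H}$ involves $e^{\pm t\diag\lambda}$, so
\[
f(\gamma_{I,H}(t)) = \log\det\Bigl(\sum_{k=1}^m B_k\, e^{\pm t\diag\lambda}\,B_k^{\dagger}\Bigr) \mp t\sum_{i=1}^n \lambda_i.
\]
Stacking the rescaled matrices $B_k\, e^{\pm t\diag\lambda/2}$ into a single $n \times mn$ matrix $M(t)$ and invoking $\det(MM^{\dagger})=\sum_S|\det M_S|^2$ expands
\[
\det\sum_k B_k\, e^{\pm t\diag\lambda}B_k^{\dagger} = \sum_{\alpha\in\ZZ_{\geq 0}^n,\,|\alpha|=n} e^{\pm t\,\alpha^{\top}\lambda}\,c_{\alpha},\qquad c_{\alpha}\geq 0,
\]
where $\alpha_i$ records how many of the $n$ chosen columns of $M(t)$ come from the $i$-th block. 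The coefficient $c_{\alpha}$ is positive exactly when, for each $i$, one can pick $\alpha_i$ vectors from ${\cal A}u_i := \mathrm{span}\{A_ku_i:k\in[m]\}$ so that the $n$ chosen vectors together form a linearly independent family in $\CC^n$.

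By Rado's theorem for linear transversal selection, this feasibility condition is equivalent to the polymatroid inequalities $\sum_{i\in I}\alpha_i \leq d_I := \dim\sum_{i\in I}{\cal A}u_i$ for every $I\subseteq[n]$. The set function $d$ is monotone and submodular---from $\dim(V+W) + \dim(V\cap W)\leq \dim V + \dim W$ together with $V_{I\cap J} \subseteq V_I\cap V_J$---and $d_{[n]}=n$ by the hypothesis ${\cal A}\CC^n=\CC^n$. Hence the feasible $\alpha$ are precisely the integer points of the base polytope $B(d)$, and the leading exponent of the Cauchy--Binet sum as $t\to\infty$ is the optimum of $\alpha^{\top}\lambda$ over $B(d)$.

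Edmonds' greedy theorem for polymatroid bases, applied to the ordering $\lambda_1 \geq \lambda_2 \geq \cdots \geq \lambda_n$, then selects the optimizer $\alpha_k^{*} = d_{\{1,\ldots,k\}} - d_{\{1,\ldots,k-1\}} = \dim{\cal A}U_k - \dim{\cal A}U_{k-1}$ (with $d_0:=0$), giving value $\sum_k\lambda_k(\dim{\cal A}U_k-\dim{\cal A}U_{k-1})$. Combining this with the $\mp t\sum_i\lambda_i$ contribution and applying summation by parts with $\lambda_{n+1}:=0$ and $d_n=n$ transforms
\[
\sum_{k=1}^n \lambda_k(d_k - d_{k-1}) - \sum_{k=1}^n \lambda_k = \sum_{k=1}^n (\lambda_k - \lambda_{k+1})(d_k - k),
\]
which is the claimed identity for $f^{\infty}(H)$.

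The main obstacle is the third step: a priori the optimum of $\alpha^{\top}\lambda$ over $B(d)$ is constrained by the full family of polymatroid inequalities indexed by arbitrary $I\subseteq[n]$, and some non-prefix constraints can be binding (e.g.\ $\alpha_n \leq \dim{\cal A}u_n$ when ${\cal A}u_n$ is small). What makes the flag data $d_1 \leq d_2 \leq \cdots \leq d_n$ alone suffice is the conjunction of submodularity of $d$ with monotonicity of $\lambda$, which is exactly the content of Edmonds' greedy theorem---this is where the flag structure of the hypothesis (the nested $U_i$) plays its essential role. The remainder is routine Cauchy--Binet bookkeeping and Abel summation.
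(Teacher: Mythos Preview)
Your argument is correct and reaches the same destination as the paper, but via a genuinely different route. Both proofs start with Cauchy--Binet applied to the $n\times mn$ matrix built from the columns $A_ku_i$, obtaining an expansion $\sum_{\alpha} c_{\alpha}\,e^{t\,\alpha^{\top}\lambda}$; the divergence is in how the dominant exponent is identified. The paper proceeds by elementary linear algebra: it puts the column-block matrix $B=(B_1\ \cdots\ B_n)$ (with $B_j=(A_1u[j]\ \cdots\ A_mu[j])$) into row echelon form, so that the nonvanishing minor with the largest exponent is the one sitting on the pivot columns $\rho(1)<\cdots<\rho(n)$; counting pivots in the first $i$ blocks then gives $\rank B[1,\ldots,im]=\dim{\cal A}U_i$ directly. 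You instead characterise the support $\{\alpha:c_{\alpha}>0\}$ via Rado's transversal theorem as the integer base polytope of the submodular function $d_I=\dim\sum_{i\in I}{\cal A}u_i$, and then invoke Edmonds' greedy theorem to evaluate $\max_{\alpha\in B(d)}\alpha^{\top}\lambda$ along the decreasing order of $\lambda$. What this buys you is a structural explanation of why the Lov\'asz extension of $U\mapsto\dim{\cal A}U-\dim U$ appears (the paper remarks on this after the lemma but does not derive it from the proof); what the paper's approach buys is self-containment---no external combinatorial theorems are needed beyond Cauchy--Binet, and the echelon pivots are in fact constructing the greedy basis by hand. One small point: your $\pm$ hedging on the sign of $e^{\pm t\diag\lambda}$ should be resolved to $+$ (consistent with the paper's computation $f^{\infty}(H)=\lim_t t^{-1}f(ue^{t\diag\lambda}u^{\dagger})$), since with the other sign the greedy order reverses and the prefix sets $\{1,\ldots,k\}$ would no longer be the ones selected.
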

Namely, $f$ is {\em asymptotically submodular} in the sense of \cite{Hirai_Hadamard2022} that  
$f^{\infty}$ coincides with the {\em Lov\'asz extension} of 
the submodular function $U \mapsto \dim {\cal A}U - \dim U$ on the modular lattice of vector subspaces.
This fact was announced in \cite[Remark 3.8 (1)]{Hirai_Hadamard2022}.
\begin{proof}
By definition of $f^{\infty}$, we have
\begin{equation}
    f^{\infty}(H) = \lim_{t \to \infty} \frac{f(u e^{t\diag \lambda} u^{\dagger})}{t} = - {\bf 1}^{\top} \lambda + \lim_{t \to \infty}\frac{1}{t} 
    \log \det \sum_{k=1}^m (A_k u)e^{t \diag \lambda} (A_ku)^{\dagger}. 
\end{equation}
Here  
\[
 \sum_{k=1}^m (A_k u)e^{t \diag \lambda} (A_ku)^{\dagger} = 
 (A_1u\  \cdots A_mu)   
 \left(
 \begin{array}{ccc}
 e^{t\diag \lambda} &  &     \\
     & \ddots    &      \\
     &      &    e^{t\diag \lambda}   
 \end{array}
 \right)
 \left(
 \begin{array}{c}
 (A_1u)^{\dagger} \\
 \vdots \\
 (A_m u)^{\dagger} 
 \end{array}
 \right).
\]
We rearrange the matrices in the RHS as follows.
Let
$M[i_1,i_2,\ldots,i_k]$ denote 
the submatrix of a matrix $M$ consisting of its $i_\ell$-th columns $\ell=1,2,\ldots,k$. 
For $j \in [n]$, define $n \times n$ matrix $B_j$ by
\[
B_j := (A_1u[j]\ A_2u[j]\ \cdots \ A_mu[j]). 
\]
Then we have
\[
 \sum_{k=1}^m (A_k u)e^{t \diag \lambda} (A_ku)^{\dagger} = 
 (B_1\  \cdots \ B_n)   
 \left(
 \begin{array}{ccc}
 e^{t\lambda_1 I_m} &  &     \\
     & \ddots    &      \\
     &      &    e^{t\lambda_nI_m}   
 \end{array}
 \right)
 \left(
 \begin{array}{c}
 B_1^{\dagger} \\
 \vdots \\
 B_n^{\dagger} 
 \end{array}
 \right).
\]
Let $B := (B_1\ B_2\ \cdots \ B_n)$. Define $mn \times mn$ 
diagonal matrix $D$ such that $D_{ii} := e^{t \lambda_{\lceil i/m \rceil}}$ for $i \in [mn]$.
Then $
\det \sum_{k=1}^m (A_k u)e^{t \diag \lambda} (A_ku)^{\dagger} = \det BDB^{\dagger}$.
Next, multiply to $B$ a product $L$  
of elementary matrices (other than that of scalar row multiplication)
 such that $\tilde B := LB$ is a row echelon form: 
For $i \in [n]$ there is $\rho(i) \in [nm]$ such that $\tilde B_{i\rho(i)} \neq 0$ and
$\tilde B_{i'j'} = 0$ for $i' \geq i$ and $j' < \rho(i)$.
By $\det \tilde BD\tilde B^{\dagger} = \det BDB^{\dagger}$ and Binet-Cauchy formula, it holds
\[
 \det BDB^{\dagger} = \sum_{i_1 < i_2 < \cdots < i_n} 
 |\det \tilde B[i_1,i_2,\ldots, i_n]|^2 e^{t (\lambda_{\lceil i_1/m \rceil} + \lambda_{\lceil i_2/m \rceil} + \cdots + \lambda_{\lceil i_n/m \rceil})}.
\]
Then, by $\lim_{t \to \infty} (1/t)\log \sum_{j} e^{a_j + b_jt} = \max_{j} b_j$, we have
\begin{eqnarray*}
\lim_{t \to \infty} \frac{1}{t} \log \det BDB^{\dagger} &=& \max \{ \lambda_{\lceil i_1/m \rceil}+\lambda_{\lceil i_2/m \rceil}+ \cdots + \lambda_
{\lceil i_n/m \rceil} \mid\det \tilde B[i_1,i_2,\ldots, i_n] \neq 0 \} \\
&=&  \lambda_{\lceil \rho(1)/m \rceil}+\lambda_{\lceil \rho(2)/m \rceil}+ \cdots + \lambda_
{\lceil\rho(n)/m \rceil} \\
&=& \sum_{i=1}^n \lambda_i |\{ j \in [n] \mid \rho(j) \in [(i-1)m+1, im] \}| \\
&=& \sum_{i=1}^n \lambda_i (\rank \tilde B[1,\ldots,im] - \rank \tilde B[1,\ldots,(i-1)m]) \\
&=& \sum_{i=1}^n (\lambda_i - \lambda_{i+1}) \rank B[1,\ldots,im] \\
&=& \sum_{i=1}^n (\lambda_i - \lambda_{i+1}) \rank (A_1u[1,\ldots,i]\  \cdots \ A_m u[1,\ldots,i]) .
\end{eqnarray*}
In the first equality, we use the facts that $\tilde B$ is the echelon form and that $i \mapsto \lambda_{\lceil i/m \rceil}$ is nondecreasing, where $\lceil r \rceil$ 
denotes the minimum integer at least $r$.
Since $\rank (A_1u[1,\ldots,i]\ \cdots \ A_mu[1,\ldots,i])  = \dim {\cal A}U_i$ and 
${\bf 1}^{\top} \lambda = \sum_i \lambda_i (i - (i-1)) = \sum_i (\lambda_i - \lambda_{i+1}) i = \sum_i (\lambda_i - \lambda_{i+1}) \dim U_i$,
we have the claim.
\end{proof}

\begin{Lem}\label{lem:inf_f^infty}
Suppose that ${\cal A}\CC^n = \CC^n$. Then it holds
\[
\inf_{H \in S_n: \| H\|_{\infty} \leq 1} f^{\infty}(H)  = 
2 \min_{U \leqslant \CC^n}  \dim {\cal A}U - \dim U.
\]
\end{Lem}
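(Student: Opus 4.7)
The plan is to combine Lemma~\ref{lem:f^infty} with an elementary bang-bang argument. Any $H \in S_n$ with $\|H\|_\infty \leq 1$ can be written as $H = u \diag \lambda\, u^\dagger$ with $\lambda_1 \geq \lambda_2 \geq \cdots \geq \lambda_n$ and $|\lambda_i| \leq 1$ for all $i$. Writing $d_i := \dim {\cal A}U_i - \dim U_i$ for the dimension gap along the flag $U_0 \subset U_1 \subset \cdots \subset U_n$ spanned by the columns of $u$, Lemma~\ref{lem:f^infty} gives
\begin{equation*}
f^\infty(H) = \sum_{i=1}^n (\lambda_i - \lambda_{i+1}) d_i, \quad \lambda_{n+1} := 0.
\end{equation*}
The hypothesis ${\cal A}\CC^n = \CC^n$ yields $d_n = 0$, and by convention $d_0 = 0$. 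So the last summand $\lambda_n d_n$ vanishes, and I am left with $\sum_{i=1}^{n-1} (\lambda_i - \lambda_{i+1}) d_i$, where each coefficient is nonnegative by the monotonicity of $\lambda$.

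Next I would solve the resulting linear program in $\lambda$ for fixed $u$. Since the coefficients $c_i := \lambda_i - \lambda_{i+1}$ for $1 \leq i \leq n-1$ are nonnegative and their sum $\lambda_1 - \lambda_n$ is at most $2$ (the diameter of $[-1,1]$), the minimum of $\sum_{i=1}^{n-1} c_i d_i$ over the feasible $\lambda$ is exactly $2 \min\bigl(0, \min_{1 \leq i \leq n-1} d_i \bigr)$, which equals $2 \min_{0 \leq i \leq n} d_i$ after including the trivial endpoints. The minimizing $\lambda$ is the bang-bang choice $\lambda_1 = \cdots = \lambda_k = 1$, $\lambda_{k+1} = \cdots = \lambda_n = -1$, where $k$ is an index attaining the minimum $d_k$; this is clearly admissible ($|\lambda_i|=1$) and produces a single positive jump of size $2$ at position~$k$.

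Finally I would optimize over the unitary $u$. For every $k \in \{0,1,\ldots,n\}$ and every $k$-dimensional subspace $U \leqslant \CC^n$ there is a unitary $u$ whose first $k$ columns span $U$, so $\min_u d_k(u) = \min_{U:\dim U = k}(\dim {\cal A}U - \dim U)$. Taking the minimum over $k$ as well gives
\begin{equation*}
\min_{u \in U_n} \min_{0 \leq i \leq n} d_i(u) = \min_{U \leqslant \CC^n}\bigl(\dim {\cal A}U - \dim U\bigr),
\end{equation*}
and multiplying by $2$ yields the claimed identity, with an explicit minimizer $H^*$ available.

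I do not expect a real obstacle here: the computation is just bookkeeping once Lemma~\ref{lem:f^infty} is in hand. The only subtle point worth stating carefully is that the minimization of the piecewise-linear function $\lambda \mapsto \sum_i (\lambda_i - \lambda_{i+1}) d_i$ on the sorted box $\{1 \geq \lambda_1 \geq \cdots \geq \lambda_n \geq -1\}$ is attained at a $\{\pm 1\}$-vertex (the bang-bang step), so that the infimum can be reduced to a combinatorial minimum over subspaces without any continuous optimization remaining.
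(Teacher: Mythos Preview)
Your proposal is correct and follows essentially the same route as the paper: both invoke Lemma~\ref{lem:f^infty}, use ${\cal A}\CC^n = \CC^n$ to drop the $i=n$ term, reduce the minimization over $\lambda$ in the sorted box to a bang-bang vertex (the paper via the inequality chain $\sum_i(\lambda_i-\lambda_{i+1})d_i \geq (\lambda_1-\lambda_n)d_{i^*} \geq 2d_{i^*}$, you via the explicit LP on the simplex $\{c \geq 0,\ \sum c_i \leq 2\}$), and then realize the lower bound by choosing $u$ whose first $k$ columns span a minimizing subspace $U$. The only cosmetic difference is that the paper treats $(\leq)$ and $(\geq)$ separately while you phrase it as solving the inner LP exactly; the content is identical.
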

\begin{proof}
For $U \leqslant \CC^n$ with $k := \dim U$,
choose a unitary matrix $u$ containing a basis of $U$
in the first $k$ columns,
and define $\lambda_0 := (\overbrace{1, 1,\ldots,1}^{k},-1, -1, \ldots, -1)$.
Then, by Lemma~\ref{lem:f^infty}  (with $U_k = U$ and ${\cal A}U_n = U_n (=\CC^n)$), 
we have $f^{\infty}(u\diag \lambda_0 u^{\dagger}) 
= 2 (\dim {\cal A}U- \dim U)$. Thus $(\leq)$ holds.

For $H = u \diag \lambda u^{\dagger}$ with $\|\lambda\|_\infty \leq 1$, it holds
\begin{equation}
f^{\infty}(u \diag \lambda u^{\dagger}) = \sum_{i=1}^{n-1} (\lambda_i - \lambda_{i+1}) (  \dim {\cal A}U_i - \dim U_i),
\end{equation}
where we use $\dim {\cal A}U_n - \dim U_n =0$.
Choose an index $i^* \in [n]$ that minimizes $i \mapsto \dim {\cal A}U_i - \dim U_i \leq 0$.
Then we have $
f^{\infty}(u \diag \lambda u^{\dagger}) \geq \sum_{i=1}^{n-1} (\lambda_i - \lambda_{i+1}) (  \dim {\cal A}U_{i^*} - \dim U_{i^*}) \geq (\lambda_1- \lambda_n) (\dim {\cal A}U_{i^*} - \dim U_{i^*}) \geq 2 (\dim {\cal A}U_{i^*} - \dim U_{i^*})$. Hence $(\geq)$ holds.
\end{proof}

\begin{proof}[Proof of Theorem~\ref{thm:main}]
Let ${\cal A}^{\dagger} := (A_1^{\dagger}, A_2^{\dagger},\ldots, A_m^{\dagger})$.
We first consider the case 
where ${\cal A}\CC^n = \CC^n$ or  ${\cal A}^{\dagger}\CC^n = \CC^n$.
We may assume that ${\cal A}\CC^n = \CC^n$ (by replacing ${\cal A}$ by ${\cal A}^{\dagger}$ if necessary).
By Theorem~\ref{thm:duality} and Lemmas~\ref{lem:weakduality2}, \ref{lem:df}, \ref{lem:inf_f^infty}, we have
\begin{eqnarray}
&& \sup_{H\in S_n:\|H\|_{\infty} \leq 1} - f^{\infty}(H)  = \inf_{X \in P_n} \|df(X)\|^*_{\infty,X}= \inf_{g,h \in GL_n:T_{g^{\dagger}{\cal A}h}(I) = I} \|T^*_{g^{\dagger}{\cal A}h}(I) - I\|_{1} \nonumber \\
&&  \geq \inf_{g,h \in GL_n} \|T_{g^{\dagger}{\cal A}h}(I) - I\|_{1} + \|T_{g^{\dagger}{\cal A}h}^*(I) - I\|_{1} \nonumber \\
&& \geq 2 \max_{U \leqslant \CC^n} \dim U - \dim {\cal A}U = \sup_{H \in S_n:\|H\|_{\infty}\leq 1} - f^{\infty}(H).
\end{eqnarray}
Thus, all inequalities hold in equality. 
From the formula (Theorem~\ref{thm:FR}) of nc-rank, we have the claim.

Next consider the case where  $\dim {\cal A}\CC^n < n$ and  $\dim {\cal A}^{\dagger}\CC^n < n$.
Then there are $g,h \in GL_n$ such that
\[
g^{\dagger} A_k h = \left(  
\begin{array}{cc}
B_k & 0 \\
0   & 0
\end{array}\right) \quad (k \in [m])
\]
for $n' \times n'$ matrices $B_k$ $(k \in [m])$. Let ${\cal B} := (B_1,B_2,\ldots,B_m)$. 
We can assume that ${\cal B}\CC^{n'} = \CC^{n'}$ or ${\cal B}^{\dagger}\CC^{n'} = \CC^{n'}$.
In particular, (\ref{eqn:main}) holds for ${\cal B}$.
Since both sides of (\ref{eqn:main}) do not change under ${\cal A} \to g^{\dagger}{\cal A}h$,
we can assume from first that $A_k = \left(  
\begin{array}{cc}
B_k & 0 \\
0   & 0
\end{array}\right)$ for $k \in [m]$.
We claim:
\begin{equation}\label{eqn:AB}
\min_{U \leqslant \CC^n} \dim {\cal A}U - \dim U =  - (n-n') + \min_{V \leqslant \CC^{n'}} \dim {\cal B}V - \dim V. 
\end{equation}
Let $U_0$ be the coordinate vector subspace 
spanned by unit vectors $e_j$ for $j=n'+1,n'+2,\ldots,n$.
It is clear that  $\dim U_0 = n-n'$ and ${\cal A}U_0 = \{0\}$.
For $V \leqslant \CC^{n'}$, let $U := V + U_0$ (direct sum).
Then $\dim {\cal A}U - \dim U = - \dim {\cal B}V - \dim V - \dim U_0$, 
and we have ($\leq$).
Conversely, 
choose $U \leqslant \CC^n$ with minimum $\dim {\cal A}U - \dim U$.
It necessarily holds $U_0 \leqslant U$; otherwise 
replacing $U$ by $U + U_0$ decreases $\dim {\cal A}U - \dim U$.
Let $\pi:\RR^n \to \RR^{n'}$ denote the projection to 
the first $n'$ components. Let $V:= \pi U$. 
Then ${\cal A}U = {\cal B}V$. Also, since $\pi$ 
induces an injection $U/U_0 \to \pi(U) = V$, 
it holds $\dim V = \dim U - \dim U_0$.
From this, we have ($\geq$).

Now we have
\begin{eqnarray*}
&& 2 \max_{U \leqslant \CC^n} \dim U - {\cal A}U \leq \inf_{g,h \in GL_n} \|T_{g^{\dagger}{\cal A}h}(I) - I \|_1 + \|T^*_{g^{\dagger}{\cal A}h}(I) - I \|_1 \\
&& \leq 2(n-n') + \inf_{g',h' \in GL_{n'}} \|T_{(g')^{\dagger}{\cal B}h'}(I) - I \|_1 + \|T^*_{(g')^{\dagger}{\cal B}h'}(I) - I \|_1 \\
&& = 2(n-n') + 2 (\max_{V \leqslant \CC^{n'}} \dim V - \dim {\cal B}V) \\
&& = 2(n-n') + 2 \{- (n-n') + \max_{U \leqslant \CC^n} \dim U - \dim {\cal A}U\}. 
\end{eqnarray*}
Thus we obtain (\ref{eqn:main}) for ${\cal A}$.
Here, the first inequality follows from Lemma~\ref{lem:weakduality2}.
The second one is obtained by restricting $g,h \in GL_n$ 
to $g = \left(  
\begin{array}{cc}
g' & 0 \\
0   & I
\end{array}\right)$, $h = \left(  
\begin{array}{cc}
h' & 0 \\
0   & I
\end{array}\right)$ for $g',h' \in GL_{n'}$.
For the next equality, we use (\ref{eqn:main}) for ${\cal B}$ shown above. 
The final equality follows from (\ref{eqn:AB}). 
\end{proof}

\begin{Rem}
By these arguments, we see that the problem of 
minimizing the trace norm of the scaling residual 
\[
 {\rm (P)} \quad \mbox{Min.}  \quad \frac{1}{2} \left\{ \|T_{g^{\dagger}{\cal A}h} -I\|_1+  \|T^*_{g^{\dagger}{\cal A}h} -I\|_1  \right\} \quad 
    \mbox{s.t.}  \quad  g,h \in GL_n
\]
has the following dual problem: 
\begin{eqnarray*}
  {\rm (D)} \quad  \mbox{Max.} && - \sum_{i=1}^n (\lambda_i - \lambda_{i+1}) (  \dim {\cal A}U_i - \dim U_i) \\
    \mbox{s.t.} && \{0\} < U_1 < U_2 < \cdots < U_n = \CC^n, \\
    &&  1 \geq \lambda_1 \geq \lambda_2 \geq \cdots \geq \lambda_n \geq -1,
\end{eqnarray*}
where the notation $U < U'$ means that $U$ is a proper vector subspace of a vector space $U'$.
Then the optimal values of (P) and (D) are equal to the nc-rank.

As seen in Lemma~\ref{lem:df},
the objective function of the primal problem (P) is the norm of the differential $df$
of geodesically convex function $f$ on Hadamard manifold $P_n$.
For the scalable case, the case where $f$ is bounded below, 
the problem (P) is solved by minimizing $f$, 
as the operator Sinkhorn algorithm~\cite{GGOW,Gurvits2004} does so. 

On the other hand,
the dual problem (D) is also viewed 
as geodesically convex optimization on a {\em Hadamard space}---complete geodesic metric space having nonpositive curvature~\cite{BridsonHaefliger1999}.
Here the variable of (D) is a pair of
a complete flag of vector subspaces $U_i$ and nonincreasing weights $\lambda_i$.  
All such pairs $(\{U_i\}, \{\lambda_i\})$ form a so-called {\em Euclidean building}~\cite[Chapter II.10 Appendix]{BridsonHaefliger1999}, 
which is a representative ``non-manifold" Hadamard space.
Further, the objective function, which is the negative of the Lov\'asz extension of 
submodular function $U \mapsto \dim {\cal A}U - \dim U$,  
is a concave function on this space. 
These come from facts that 
the boundary of a Hadamard manifold becomes a Hadamard space and that
$f^{\infty}$ is a convex function on the boundary; see \cite{Hirai_Hadamard2022}.

The nc-rank computation algorithm by Hamada-Hirai~\cite{HamadaHirai2021}
solves a variant of the problem (D), and hence may be viewed 
as a dual algorithm, whereas the operator Sinkhorn may be viewed as a primal algorithm.
It is an interesting question whether the algorithm of Ivanyos, Qiao and Subrahmanyam~\cite{IQS2017,IQS2018}
has a primal-dual interpretation in this setting. 
\end{Rem}

\section*{Acknowledgments}
The author thanks Keiya Sakabe, Shin-ichi Ohta, Yuni Iwamasa, and Tasuku Soma for discussion, 
and thanks Cheng Zhuohao and the referees for helpful comments.
The author was supported by JSPS KAKENHI Grant Number JP24K21315.

\bibliographystyle{plain}
\bibliography{scaling_nc_rev}

\appendix
\section{Proof of Lemma~\ref{lem:norm}}
(2). 
For $H = u \diag \lambda u^{\dagger}$, it holds
\begin{eqnarray*}
\| H \|_{v}^* &=& \max \{ \trace (\diag \lambda) u^{\dagger} X u \mid X \in S_n:\|X\|_{v} \leq 1 \} \\
&=& \max \{ \trace (\diag \lambda) Y \mid Y \in S_n:\|Y\|_{v} \leq 1 \} \\
&=& \max \{ \lambda^{\top}  D(Y) \mid Y \in S_n:\|Y\|_{v} \leq 1 \}. 
\end{eqnarray*}
Here, by restricting the range to diagonal matrices $Y = \diag \mu$, we have
\[
\| H \|_{v}^* \geq \max \{ \lambda^{\top} \mu \mid v(\mu) \leq 1\} = v^*(\lambda) = \|H\|_{v^*}.
\]
On the other hand, by the Schur-Horn theorem, 
$D(Y)$ belongs to the permutation polytope of the vector $\mu$ of eigenvalues of $Y$.
Thus, we have
\[
 \lambda^{\top} D(Y) \leq \max \{ \lambda^{\top} \sigma \mu \mid \sigma:\mbox{permutation matrix} \}.
\]
Thus 
\[
\| H \|_{v}^* \leq \max \{\lambda^{\top}\mu \mid v(\mu) \leq 1\} = v^*(\lambda) = \|H\|_{v^*}.
\]

(3).
Let $H = u \diag \lambda u^*$ for $u \in U_n$ and $\lambda \in \RR^n$.
Then 
\[
\|H\|_v = v(\sigma \lambda) \quad (\forall \sigma:\mbox{permutation matrix}).
\]
By the Schur-Horn theorem and the convexity (of sublevel sets) of $v$, it holds
\[
D(H) \in \mbox{convex hull of }\{\sigma \lambda \mid \sigma:\mbox{permutation matrix}\} \subseteq \{x \in \RR^n \mid v(x) \leq \|H\|_v \}. 
\]
This implies  $v(D(H)) \leq \|H\|_v$.

(1).
    It suffices to show $\|X+Y\|_v \leq \|X\|_v+\|Y\|_v$.
    Let $Z := X+Y$ and suppose that 
    $Z = u \diag \lambda u^{\dagger}$ for $u \in U_n$ and $\lambda \in \RR^n$.
    Let $\tilde X := u^{\dagger} X u$ and $\tilde Y := u^{\dagger} Y u$.
    By
    $\diag \lambda = D(\tilde X) + D(\tilde Y)$ and (2), 
    we have
     \[
     \|X+Y\|_v = v(D(\tilde X) + D(\tilde Y)) \leq v( D(\tilde X)) + v( D(\tilde Y)) \leq \|\tilde X\|_v +\|\tilde Y\|_v =\|X\|_v +\|Y\|_v.  
     \]

\end{document}